\documentclass[12pt]{amsart}
\usepackage{amsthm}
\newtheorem{theorem}{Theorem}[section]
\newtheorem{lemma}[theorem]{Lemma}
\newtheorem{proposition}[theorem]{Proposition}
\theoremstyle{definition}
\newtheorem{definition}[theorem]{Definition}

\theoremstyle{remark}
\newtheorem{remark}[theorem]{Remark}
\newtheorem{corollary}[theorem]{Corollary}
\counterwithin*{section}{part}
\usepackage{amssymb}
\usepackage{empheq}
\usepackage{stmaryrd}
\usepackage{enumerate}
\usepackage[shortlabels]{enumitem}
\usepackage{calc}
\usepackage{url}
\providecommand{\dotdiv}{%
  \mathbin{%
    \vphantom{+}%
    \text{%
      \mathsurround=0pt
      \ooalign{%
        \noalign{\kern-.35ex}%
        \hidewidth$\smash{\cdot}$\hidewidth\cr
        \noalign{\kern.35ex}%
        $-$\cr
      }%
    }%
  }%
}
\usepackage{xcolor}
\usepackage{fonttable}
\DeclareFontFamily{U}{rcjhbltx}{}
\DeclareFontShape{U}{rcjhbltx}{m}{n}{<->s*[1.2]rcjhbltx}{}
\DeclareSymbolFont{hebrewletters}{U}{rcjhbltx}{m}{n}
\DeclareMathSymbol{\fe}{\mathord}{hebrewletters}{112}
\DeclareMathSymbol{\tsadi}{\mathord}{hebrewletters}{118}
\newcommand{\norm}[1]{\left\lVert#1\right\rVert}
\usepackage{mathtools}

\newcommand{\NN}{\mathbb{N}}

\newcommand{\RR}{\mathbb{R}}
\newcommand{\PP}{\mathbb{P}}
\newcommand{\EE}{\mathbb{E}}

\newcommand{\eps}{\varepsilon}

\usepackage[left=2.0cm,%
                right=2.0cm,%
                top=2.5cm,%
                bottom=3.5cm,%
                headheight=12pt,%
                a4paper]{geometry}%

\usepackage[colorlinks=true,urlcolor=blue,linkcolor=blue,plainpages=false,pdfpagelabels]{hyperref}

\begin{document}

\title[Quantitative limit theorems for generalized 
P\'olya urns]{Quantitative limit theorems for generalized 
P\'olya urns with applications to random tree models}

\author[Morenikeji Neri and Pedro Pinto]{Morenikeji Neri${}^{\MakeLowercase a}$ and Pedro Pinto${}^{\MakeLowercase b}$}
\maketitle

\vspace*{-5mm}
\begin{center}
{\scriptsize 
${}^a$ Department of Mathematics, Technische Universit\"at Darmstadt,\\
Schlossgartenstra\ss{}e 7, 64289 Darmstadt, Germany,\\ 
\protect{neri@mathematik.tu-darmstadt.de}\\
${}^b$ Center for Mathematical Studies, University of Lisbon,\\
Campo Grande, 1749-016 Lisboa, Portugal,\\
\protect{pmpinto@ciencias.ulisboa.pt}\\}
\end{center}

\begin{abstract}
We establish novel quantitative limit theorems for the asymptotic distribution 
of colours in a generalized P\'olya urn. Concretely, we construct explicit rates 
of convergence for the proportion of balls of each colour in the urn, both in 
square-mean and almost surely, under a general condition on the replacement 
matrix. As an application, we revisit three models of random recursive trees 
studied by Janson (Random Structures \& Algorithms 26 (2005), 69--83): random 
recursive trees, random plane recursive trees, and random recursive $d$-ary 
trees. For each model, we show that the corresponding outdegree statistics can 
be cast as generalized P\'olya urns, and thereby obtain explicit rates of 
convergence, both in $L^2$ and almost surely, for the proportion of nodes of 
each outdegree. In all three cases, the rates we obtain are of order $O(1/n)$ 
in $L^2$ and almost surely, and are uniform in the outdegree under consideration.
\end{abstract}
\noindent
{\bf Keywords:}  P\'olya urn, random tree, random recursive tree, rates of convergence, stochastic approximation, proof mining\\ 
{\bf MSC2020 Classification:} 62L20, 60J80, 60F15, 05C05, 03F10
\section{Introduction}

We consider a generalized P\'olya urn process $(U(n))_{n\geq 0}$ defined as follows (cf.\ \cite{janson2004functional}).\footnote{This is a generalisation of the well-known process of P\'olya-Eggenberger urn~\cite{EggenbergerPolya1923} or Friedman urn~\cite{Friedman1949}.} One has an urn containing balls which can be one of $d\geq 2$ different colours and each $U(n)$ is a vector in $\NN^d$ stipulating the configuration $(U_0(n), \cdots, U_{d-1}(n))$ of the urn at time $n$, where $U_i(n)\geq 0$ indicates the number of balls of colour $i$. The urn starts in an initial state $U(0)\in \NN^d$ which is assumed not to be the zero vector. The dynamics of the urn are determined by a replacement matrix $(R_{ij})_{0\leq i,j\leq d-1}$, whose entries are integers. At each step, we uniformly at random select a ball from the urn and check its colour. If a ball of colour $i$ was selected, we add (or remove) balls to the urn according to the entries in the row $R_i$ (with negative entries corresponding to removing balls). 

Let $T(n):=\sum_{i=0}^{d-1}U_i(n)$ be the total number of balls inside the urn at time $n$. We say that a generalized P\'olya urn has \emph{balance} $\lambda \in \NN\setminus \{0\}$, if for all $0\leq i\leq d-1$ we have $\sum_{j=0}^{d-1} R_{ij}=\lambda$. A generalized P\'olya urn with a balance is said to be \emph{balanced}. This will imply that $\lambda$ is an eigenvalue of $R$. This means that the change in the number of balls in each step remains constant and therefore $T$ is deterministic, given by $T(n)=\sum_{i=0}^{d-1}U_i(0) + n\lambda$.\footnote{We assume in our generalized P\'olya urn model that the initial state $U(0)$ is always deterministically given.}

The removal of balls from the urn is permitted provided that the number of balls of every colour remains nonnegative throughout the evolution of the urn. Whenever the number of balls of a given colour becomes negative, the urn is declared extinct, and the process stops. Conditions on the replacement matrix $R$ and the initial composition $U(0)$ can be placed that guarantee that extinction almost surely never occurs (see e.g.\ \cite{Mahmoud2009}). One such condition is that for each $i$ there is an integer $d_i \ge 1$ such that $R_{ii}\ge 0$ or $R_{ii}=-d_i$ and $d_i$ divides $U_i(0)$ and $R_{ij}$ for all $0\le j\le d-1$. Such urns are called tenable \cite{bagchi1985asymptotic}.\\

Our main results are (quantitative) limit theorems which are akin to the classic \emph{strong law of large numbers} for generalized P\'olya urns:

\begin{theorem}[cf.\ Theorem 3.1 of \cite{janson2005asymptotic}]\label{urns-convergence}
    Let $(U(n))_{n\geq 0}$ be a tenable $d$-colour generalized P\'olya urn with a irreducible replacement matrix $R$, which is assumed to have balance $\lambda \in \NN\setminus\{0\}$. For $u$ an eigenvector of $R$ with eigenvalue $\lambda$ whose components sum to 1, we have
    \[
    \frac{U(n)}{n}\to \lambda u  \mbox{ almost surely.}
    \]
    Equivalently
     \[
    \hat{U}(n) := \frac{U_i(n)}{T(n)}\to u  \mbox{ almost surely.}
    \]
\end{theorem}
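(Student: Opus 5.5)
We prove this by a stochastic approximation argument for the normalized composition $\hat U(n)=U(n)/T(n)$, where $T(n)=T(0)+n\lambda$ is deterministic. Let $\mathcal F_n$ be the natural filtration and $\xi_{n+1}\in\{e_0,\dots,e_{d-1}\}$ the indicator of the colour drawn at step $n+1$, so $U(n+1)=U(n)+R^{\top}\xi_{n+1}$ and $\EE[\xi_{n+1}\mid\mathcal F_n]=\hat U(n)$. This gives the recursion
\[
\hat U(n+1)=\hat U(n)+\frac{1}{T(n+1)}\Bigl(R^{\top}\hat U(n)-\lambda\hat U(n)\Bigr)+\frac{1}{T(n+1)}\Delta M_{n+1},
\]
with martingale increment $\Delta M_{n+1}:=R^{\top}(\xi_{n+1}-\hat U(n))$, which is bounded uniformly by a constant depending only on $R$. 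Tenability guarantees $U(n)\in\NN^d$ for all $n$, so $\hat U(n)$ stays in the simplex $\Delta$. The step sizes $a_n=1/T(n+1)\sim 1/(\lambda n)$ satisfy $\sum a_n=\infty$ and $\sum a_n^2<\infty$.

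The first step is to identify the mean field $F(x)=(R^{\top}-\lambda I)x$ on $\Delta$ and show that $u$ is its unique zero there and is globally attracting. Since the rows of $R$ sum to $\lambda$, $\mathbf 1^{\top}(R^{\top}-\lambda I)=0$, so the flow preserves the hyperplane $\{\mathbf 1^{\top}x=1\}$. Irreducibility lets us apply Perron--Frobenius to $R+cI$ with $c$ large, which is nonnegative and irreducible. Because the row sums are all $\lambda$, we get $\lambda=\rho(R)$, the eigenvalue is simple, and $u$ can be taken positive. All other eigenvalues of $R$ satisfy $\operatorname{Re}\mu<\lambda$. Hence on the invariant subspace $H=\{y:\mathbf 1^{\top}y=0\}$ the matrix $A:=(R^{\top}-\lambda I)|_H$ is Hurwitz. (Here I am interpreting "eigenvector with eigenvalue $\lambda$" as the left eigenvector $u^{\top}R=\lambda u^{\top}$, which is the one whose normalization is the limiting proportion.) Writing $y_n=\hat U(n)-u\in H$, we obtain the linear recursion
\[
y_{n+1}=(I+a_nA)\,y_n+a_n\Delta M_{n+1}.
\]

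The second step is the convergence argument. Choose a positive definite $P$ on $H$ solving the Lyapunov equation $A^{\top}P+PA=-I$, and set $V(y)=y^{\top}Py$. Taking conditional expectations gives
\[
\EE[V(y_{n+1})\mid\mathcal F_n]\le(1-c\,a_n)V(y_n)+C a_n^2.
\]
The cross term vanishes because $\EE[\Delta M_{n+1}\mid\mathcal F_n]=0$, and the quadratic terms in $a_n$ are bounded because everything lives in the compact simplex. By the Robbins--Siegmund theorem, $V(y_n)$ converges almost surely and $\sum a_nV(y_n)<\infty$ almost surely. Since $\sum a_n=\infty$, the almost sure limit of $V(y_n)$ must be $0$. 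Therefore $\hat U(n)\to u$ almost surely, and $U(n)/n=\hat U(n)\,T(n)/n\to\lambda u$. This also yields an $L^2$ bound via a Chung-type lemma applied to $\EE V(y_n)$, which matches the quantitative aims of the paper.

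The main obstacle is the spectral step: establishing that $R^{\top}-\lambda I$ is strictly stable on $H$ when $R$ has negative diagonal entries, and hence is not itself nonnegative. The plan is to handle this through the shift $R+cI$ and Perron--Frobenius as above. The delicate point is ruling out other eigenvalues with real part equal to $\lambda$. For the shifted matrix, any eigenvalue $\mu+c$ with $\operatorname{Re}\mu=\lambda$ and $\mu\ne\lambda$ would have modulus strictly exceeding $\lambda+c$, contradicting $\rho(R+cI)=\lambda+c$. Hence stability holds.
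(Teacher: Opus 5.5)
Your proof is correct, and it takes a genuinely different route from the paper. The paper does not prove this statement itself; it quotes it from Athreya--Karlin and Janson, whose argument embeds the urn into a continuous-time multitype branching process.

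The paper's own stochastic-approximation results (Theorems~\ref{Lemma-urns-slow} and~\ref{Lemma-urns-fast}) need a symmetric positive semidefinite $P$ satisfying $(**)$. The authors only use explicit choices $P=J_p$, checked by hand for the tree urns, and they say they do not know whether this works for every tenable irreducible urn.

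Your argument has two steps. First, Perron--Frobenius for $R+cI$ shows that your restricted matrix is Hurwitz on the sum-zero hyperplane $H$; call it $B:=(R^\top-\lambda I)|_H$, to avoid a clash with the constant $A$ in $(**)$. Second, you take $P$ from the Lyapunov equation $B^\top P+PB=-I$. This supplies such a form in general, sidestepping (not answering) the paper's open question about $P=J_d,J_{d-1}$. Extending $P$ by $0$ on $H^\perp=\operatorname{span}(\mathbf{1})$ gives a symmetric positive semidefinite $d\times d$ matrix whose seminorm is a norm on $H$, and it satisfies $(**)$ with constant $1/(2\norm{P})$.

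Consequently:
\begin{itemize}
\item Fed into Theorem~\ref{Lemma-urns-slow}, your argument gives explicit rates for every tenable irreducible balanced urn. The constants depend on $\norm{P}$, which is not explicit in $R$; the paper's hand-made $J_p$ is what yields clean constants for the trees.
\item Solving $(B+\alpha I)^\top P+P(B+\alpha I)=-I$ instead, any constant below $\lambda-\operatorname{Re}\mu_2$ is attainable, where $\mu_2$ is an eigenvalue of $R$ other than $\lambda$ with largest real part. So the $O(1/n)$ rates of Theorem~\ref{Lemma-urns-fast} become available whenever $\operatorname{Re}\mu_2<\lambda/2$, i.e.\ for small urns.
\item What the embedding approach buys in exchange is more general dynamics (e.g.\ random replacements), together with functional limit theorems and fluctuation results that a Robbins--Siegmund argument does not give.
\end{itemize}

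A few points to tidy:
\begin{itemize}
\item The sentence deriving $\lambda=\rho(R)$ from constant row sums is false when $R$ has negative diagonal entries. The matrix with rows $(-1,2)$ and $(2,-1)$ is tenable, irreducible and balanced with $\lambda=1$, yet has eigenvalue $-3$. What is true, and what your last paragraph actually uses, is $\rho(R+cI)=\lambda+c$.
\item The shift to a nonnegative matrix needs $R_{ij}\ge 0$ for $i\neq j$. This is the standard convention under which irreducibility is meant, and you should state it.
\item Before calling $B$ Hurwitz, note that $0$ is not an eigenvalue of $B$. This holds because $\ker(R^\top-\lambda I)=\operatorname{span}(u)$ by simplicity of $\lambda$, and $u\notin H$.
\end{itemize}
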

 Theorem \ref{urns-convergence} is typically attributed to Athreya and Karlin \cite{AthreyaKarlin1968}, who established the result under the assumption that the replacement matrix was nonnegative. The case when the urn is tenable and further extensions are due to Janson \cite{janson2004functional,janson2005asymptotic}.\\

 Our main results (Theorems \ref{Lemma-urns-slow} and \ref{Lemma-urns-fast}) establish a quantitative rendering of the convergence in Theorem \ref{urns-convergence} (both almost surely and in $L^2$) with the conditions that the urn is tenable and irreducible replaced with a different condition on the replacement matrix. To motivate our quantitative results on almost sure convergence, we recall that 
a stochastic process $(X_n)$ converges almost surely to a random variable $X$ 
if and only if, for all $\varepsilon > 0$,
\[
\lim_{n \to \infty} \PP\bigl(\exists\, m \geq n\, (|X_m - X| \geq \varepsilon)
\bigr) = 0. \tag{$\star$}
\]
This equivalence is entirely standard and follows from the continuity of 
probability measures (see, e.g., \cite[Lemma 3.1]{NeriPowell2024}). A natural way to quantify the speed of almost sure convergence is therefore to study the 
speed at which the probabilities in $(\star)$ tend to 0.\footnote{Together with Pischke and Oliva, the first author provided a proof-theoretic justification 
in \cite{NeriOlivaPischke2026} for why the computational content of almost sure 
convergence should be studied via $(\star)$, in the context of extending 
Kohlenbach's proof mining program \cite{Koh2008} to probability 
theory \cite{NeriPischke2024}.} This approach was used to study the speed of 
convergence in the Strong Law of Large Numbers by Siegmund \cite{siegmund1975large} 
(see also \cite{Fill:83:Largedev}) and more recently by the first 
author \cite{Neri2024,Neri2025}. There has also been a growing interest in 
using $(\star)$ to study quantitative almost sure convergence in nonlinear 
stochastic approximation and stochastic optimisation; 
see \cite{NeriPischkePowell2025b,NeriPischkePowell:fejer:rates,Pischke2025b,
Pischke2026,PischkePowell2024,PischkePowell2026} for example.\\

We illustrate our results with the following instance of our quantitative limit 
theorems. Throughout the paper, for a matrix $P$, we write $\operatorname{symm}(P):= (P+P^\top)/2$ 
for its symmetric part and $\lambda_{\max}(P)$ for its eigenvalue of largest real 
part.
 \begin{theorem}
 \label{thrm:main:app}
      Let $(U(n))_{n\geq 0}$ be $d$-colour generalized P\'olya urn with replacement matrix $R$, which is assumed to have balance $\lambda \in \NN\setminus\{0\}$ and satisfy 
      \[
A:=\lambda_{\max}(\operatorname{symm}((R - \lambda I) )) < -\frac{\lambda}{2}. 
\]
For $u$ an eigenvector of $R$ with eigenvalue $\lambda$ whose components sum to 1, we have
  \[
\EE\left[\norm{\hat{U}(n)-u}^2\right]\leq\frac{\lambda t}{\lambda(n+1)+T(0)} \mbox{ for all } n \in \NN\]
and 
\[
\PP\left(\exists m\geq n\left(\norm{\hat{U}(n)-u}\geq \varepsilon\right)\right)\leq \frac{\lambda}{\varepsilon^2}\cdot\frac{t+2q}{\lambda(n+1)+T(0)} \mbox{ for all } n \in \NN \mbox{ and }\eps>0
\]
where 
\[
t\geq \max\left\{\frac{-\lambda q}{2A+\lambda},5\left(1+\frac{T(0)}{\lambda}\right)\right\}, q:=\left(\frac{D+\lambda}{\lambda}\right)^2 \mbox{ and } D\ge \max_{i\le d-1}\norm{R_i}.
\]
 \end{theorem}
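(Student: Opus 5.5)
The plan is to write the normalised composition $\hat U(n)=U(n)/T(n)$ as a stochastic approximation scheme and run a Lyapunov/Robbins--Siegmund type argument on $V_n:=\norm{\hat U(n)-u}^2$. Let $\xi_{n+1}$ be the indicator vector of the colour drawn at step $n+1$. Then $U(n+1)=U(n)+R^\top\xi_{n+1}$ and $\EE[\xi_{n+1}\mid\mathcal F_n]=\hat U(n)$. Since $T(n+1)=T(n)+\lambda$, setting $\gamma_n:=1/T(n+1)$ gives
\[
\hat U(n+1)=\hat U(n)+\gamma_n\bigl(R^\top\xi_{n+1}-\lambda\hat U(n)\bigr)
=\hat U(n)+\gamma_n\bigl((R-\lambda I)^\top\hat U(n)+M_{n+1}\bigr),
\]
where $M_{n+1}:=R^\top(\xi_{n+1}-\hat U(n))$ is a martingale difference. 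The vector $u$ is a fixed point: $u$ is a left eigenvector, $R^\top u=\lambda u$ (this is the sense needed here). So with $e_n:=\hat U(n)-u$ we get $e_{n+1}=e_n+\gamma_n\bigl((R-\lambda I)^\top e_n+M_{n+1}\bigr)$.

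\textbf{Step 1 (one-step inequality).} Expand $\norm{e_{n+1}}^2$ and take conditional expectations. The cross term with $M_{n+1}$ vanishes. The drift term satisfies
\[
2\gamma_n\langle e_n,(R-\lambda I)^\top e_n\rangle=2\gamma_n\langle e_n,\operatorname{symm}(R-\lambda I)e_n\rangle\le 2A\gamma_n\norm{e_n}^2 .
\]
For the quadratic part, note that $e_{n+1}-e_n=\gamma_n(R^\top\xi_{n+1}-\lambda\hat U(n))$ and $\norm{R^\top\xi_{n+1}}=\norm{R_i}\le D$ for the drawn colour $i$. Also $\norm{\hat U(n)}\le 1$, since it is a probability vector (assuming no extinction; tenability is implicit). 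Hence $\norm{e_{n+1}-e_n}\le\gamma_n(D+\lambda)$. Writing things carefully as $\norm{e_n+\Delta}^2=\norm{e_n}^2+2\langle e_n,\EE\Delta\rangle+\EE\norm\Delta^2$, this gives
\[
\EE[V_{n+1}\mid\mathcal F_n]\le (1+2A\gamma_n)V_n+\gamma_n^2(D+\lambda)^2 .
\]

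\textbf{Step 2 ($L^2$ rate).} Put $a_n:=\EE V_n$ and $s_n:=T(n)/\lambda$, so $\gamma_n=1/(\lambda(s_n+1))$ and $\gamma_n^2(D+\lambda)^2=q/(s_n+1)^2$. The recursion becomes
\[
a_{n+1}\le\Bigl(1+\frac{2A/\lambda}{s_n+1}\Bigr)a_n+\frac{q}{(s_n+1)^2}.
\]
Write $c:=-2A/\lambda>1$. By induction we aim for $a_n\le t/(s_n+1)$, which is the claimed bound because $\lambda t/(\lambda(n+1)+T(0))=t/(s_n+1)$. The inductive step requires
\[
(1-c/x)\,t/x+q/x^2\le t/(x+1)\quad\text{with } x=s_n+1 ,
\]
which reduces to $t(c-1)x+tc\ge qx+\dots$. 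This holds because $t(c-1)\ge q$ is exactly $t\ge -\lambda q/(2A+\lambda)$.

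The base case needs $a_0\le t/(s_0+1)$. Since both $\hat U(0)$ and $u$ are in the simplex, $a_0\le 2$, so $t\ge 2(1+T(0)/\lambda)$ suffices, and the constant $5$ covers this with room. One subtlety is the term $1-c/x$, which may be negative for small $x$. In that case I would simply bound $a_{n+1}$ crudely by $4$ or similar and use the $5(1+T(0)/\lambda)$ lower bound on $t$. I expect this case bookkeeping to be where the constant $5$ comes from.

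\textbf{Step 3 (almost sure rate).} From Step 1, $A<0$ gives $\EE[V_{n+1}\mid\mathcal F_n]\le V_n+q/(s_n+1)^2$. So $W_m:=V_m+\sum_{k\ge m}q/(s_k+1)^2$ is a nonnegative supermartingale. Ville's maximal inequality gives
\[
\PP\bigl(\sup_{m\ge n}V_m\ge\eps^2\bigr)\le\EE W_n/\eps^2\le\Bigl(\frac{t}{s_n+1}+q\sum_{k\ge n}\frac1{(s_k+1)^2}\Bigr)\Big/\eps^2 .
\]
Since $s_{k+1}=s_k+1$, the tail sum is at most $1/(s_n+1)+1/(s_n+1)^2\le 2/(s_n+1)$, which yields $(t+2q)/(s_n+1)$, as claimed.

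The main obstacle is Step 2's induction, specifically handling the early indices where the contraction factor is not in $(0,1)$ and matching the exact constants.
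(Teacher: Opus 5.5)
Your route is the paper's route written out in one piece. You use the stochastic-approximation form of $\hat U(n)$ (Proposition~\ref{prop:urn:SA}) and the one-step bound with $K=(D+\lambda)^2$. You then treat the scalar recursion with $c=-2A/\lambda$ and $r=1+T(0)/\lambda$ by induction, as in Lemma~\ref{lem:ssLikeForQM}. Finally you use the supermartingale $V_m+\sum_{k\ge m}K\gamma_k^2$ with Ville's inequality and the tail bound $2q/(n+r)$, as in Theorem~\ref{res:RM:quant:fast}. Step 1 is fine, and Step 3 is fine given Step 2.

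The subtlety you flag in Step 2 is genuine, and the paper's proof does not address it. The induction in Lemma~\ref{lem:ssLikeForQM} multiplies $x_n\le t/(n+r)$ by $1-c/(n+r)$ without checking its sign, and the lemma can fail once $c>r$. For example, take $r=1$, $c=q=4$, $t=4/3$, $x_0=0$, $x_1=1$ and $x_n=0$ for $n\ge 2$: the recursion holds for every $n$, yet $x_1>t/2$.

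Your patch does not close the gap either. For $n$ with $n+r<c$, the crude bound $a_{n+1}\le 2$ (let alone $4$) gives $a_{n+1}\le t/(n+r+1)$ only if $t\ge 2(n+r+1)$. This must hold for the largest $n$ with $n+r<c$, which forces $t\ge 2c$. That does not follow from $t\ge\max\{q/(c-1),5r\}$ when $c$ is large relative to $r$.

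The constant $5$ has a different origin. It is $L=5>4\ge\norm{\hat U(0)-u}^2$ (condition (vi) is a strict inequality), and it enters only through the base case $t\ge rL$. You also use $u\in\mathcal S$, which the statement does not provide; it only says the components of $u$ sum to $1$. The paper's route assumes $u\in\mathcal S$ through Corollary~\ref{Lemma-urns-fast2}.

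What actually settles this statement is a missing observation: its hypothesis can never hold. Balance means $(R-\lambda I)\mathbf{1}=0$, so
\[
\mathbf{1}^\top\operatorname{symm}(R-\lambda I)\,\mathbf{1}=\mathbf{1}^\top(R-\lambda I)\,\mathbf{1}=0 .
\]
Hence $\lambda_{\max}(\operatorname{symm}(R-\lambda I))\ge 0>-\lambda/2$ for every balanced urn. Alternatively, $u^\top(R-\lambda I)u=0$ because $R^\top u=\lambda u$ and $u\neq 0$. The theorem is therefore vacuously true, and this one line is a complete proof.

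The same computation shows that the full-space Rayleigh bound in your Step 1 always gives $c\le 0$, i.e.\ no contraction at all. A usable constant can only come from restricting the quadratic form to vectors with zero sum, as in Remark~\ref{rem:A:bound}, Proposition~\ref{prop:la} with $p=d-1$, and the tree lemmas. For such non-vacuous versions, the case $1-c/(n+r)<0$ must be handled properly. One option is to assume $c\le r$; another is to enlarge $t$ so that the crude bound covers every $n<c-r$.
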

The proof of Theorem \ref{urns-convergence} given in \cite{janson2004functional,janson2005asymptotic} proceeds via an embedding argument that passes to continuous time and studies a related problem in that setting. We take a different approach: we first make the classical observation that the colour proportions of a generalised P\'olya urn satisfy a \emph{stochastic approximation algorithm}, and then study the quantitative convergence of such algorithms, relying on a similar approach of the first author, Pischke, and Powell, who establish in \cite{NeriPischkePowell:fejer:rates} quantitative results for general stochastic processes satisfying a monotonicity condition.\\

From Theorem \ref{urns-convergence}, Janson \cite{janson2005asymptotic} derived asymptotic results on the distribution of outdegrees in several random tree models. We show that these results are consequences of our limit theorems and thereby obtain novel quantitative strengthenings.

\subsection{Applications to random tree models}

A \emph{recursive} tree is a rooted tree such that each node is labelled by a natural number, and the labels of any path, starting from the root, form an increasing sequence. A \emph{random recursive tree} with $n$ nodes, $T_n$, is a random tree drawn uniformly from the $(n-1)!$ recursive trees with $n$ nodes. One can instead define random recursive trees algorithmically, where starting from a single node, we obtain $T_n$ from $T_{n-1}$, by attaching a new leaf to a node chosen uniformly at random from $T_{n-1}$.\\

Writing $X_i(n)$ for the number of nodes of outdegree $i \geq 0$ in a random recursive tree with $n$ nodes, Janson \cite{janson2005asymptotic} proved the following.

\begin{theorem}[{cf.\ Theorem 1.1 of \cite{janson2005asymptotic}}]
\label{thrm:jan}
    For all $i \geq 0$,
    \[
    \frac{X_i(n)}{n} \to \frac{1}{2^{i+1}} \quad \text{almost surely.}
    \]
\end{theorem}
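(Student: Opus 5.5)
We will cast the outdegree statistics of a random recursive tree as a balanced generalized P\'olya urn and then apply Theorem \ref{thrm:main:app}, or Theorem \ref{urns-convergence}. The urn has infinitely many colours, so we truncate. Fix $i\geq 0$. We use $i+2$ colours: colour $j$ for $0\le j\le i$ counts nodes of outdegree exactly $j$, and a last colour $i+1$ counts nodes of outdegree $>i$. A node chosen uniformly from $T_{n-1}$ corresponds to a uniformly chosen ball.

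When the chosen node has outdegree $j\le i$, it moves from colour $j$ to colour $j+1$, and the new leaf adds one ball of colour $0$. When $j=i+1$, the node stays in that colour and only a leaf is added. This gives the replacement matrix
\[
R_{j,0} \mathrel{+}= 1,\qquad R_{j,j}\mathrel{-}=1,\qquad R_{j,j+1}\mathrel{+}=1 \ (j\le i),\qquad R_{i+1,i+1}=0,
\]
so row $0$ is $(0,1,0,\dots)$ and row $j$ for $1\le j\le i$ is $e_0-e_j+e_{j+1}$. The balance is $\lambda=1$, the initial state is $U(0)=e_0$, and $T(n)=n+1$, which is the number of nodes. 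The urn is tenable, since removals only take the chosen ball, so $d_j=1$.

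Next we find the left eigenvector $u$ with $uR=u$, which is what the dynamics require; the paper's convention of an "eigenvector of $R$" should be read in this sense. The balance equations are $u_0 = \sum_{j\le i+1}u_j - u_0$, giving $u_0=1/2$, and $u_j = u_{j-1}-u_j$ for $1\le j\le i$, giving $u_j=u_{j-1}/2$. Hence $u_j=2^{-(j+1)}$, which is consistent with Theorem \ref{thrm:jan}. Irreducibility holds: from colour $j$ we reach $0$ and $j+1$, and from $i+1$ we reach $0$. Theorem \ref{urns-convergence} then gives $X_j(n)/(n+1)\to 2^{-(j+1)}$ almost surely, which is the claim, since $n/(n+1)\to1$.

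For the quantitative strengthening via Theorem \ref{thrm:main:app}, the main obstacle is verifying the condition $\lambda_{\max}(\operatorname{symm}(R-I))<-1/2$, uniformly in $i$. This may fail in the standard coordinates because of the column of $1$'s in position $0$. Our first attempt is to change variables. The drift of $\hat U$ is $(\hat U R - \hat U)$, restricted to the simplex, i.e.\ to directions $v$ with $\sum v_j=0$. On that subspace the $e_0$ contribution $\sum_j v_j$ vanishes, so the effective operator is $v\mapsto v(S-I)$, where $S$ is the shift. Its quadratic form is $v\cdot(vS) - |v|^2\le -\tfrac12|v|^2$ only with equality cases. Because the condition is borderline, we would then try a weighted norm $\sum_j w_j v_j^2$ with geometrically decreasing weights $w_j$, or a direct recursion on $\EE[(X_j(n)-(n+1)2^{-j-1})^2]$ in $j$. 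Either approach should yield the $O(1/n)$ rate uniformly in $j$, and it is this inequality we expect to require the most care.
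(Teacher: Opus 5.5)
Your argument proves the statement, but by a different route from the paper. You set up the same truncated urn the paper uses (with $M=i+1$), correctly take $u$ to be the \emph{left} eigenvector $uR=u$ with $u_j=2^{-(j+1)}$, check tenability and irreducibility, and then invoke Theorem~\ref{urns-convergence}. The index shift $U_j(n)=X_j(n+1)$, so that $\hat U_j(n)=X_j(n+1)/(n+1)$, is harmless. This is exactly Janson's original proof.

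The paper never invokes Theorem~\ref{urns-convergence}. It obtains the statement as a corollary of Theorem~\ref{thrm:main:rec}:
\begin{enumerate}
\item it writes $\hat U(n)$ as a stochastic approximation scheme (Proposition~\ref{prop:urn:SA});
\item it proves $\langle w,w(R-I)\rangle_M\le-\norm{w}_M^2$ for sum-zero $w$, in the seminorm that discards the lumped colour (Lemma~\ref{lem:rec:ineq}), so $A=1>\lambda/2$;
\item it applies Corollary~\ref{Lemma-urns-fast2}, a supermartingale argument with Ville's inequality, to get $L^2$ and maximal tail bounds of order $1/n$, uniform in $i$.
\end{enumerate}
Almost sure convergence then follows from $(\star)$.

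Your route is shorter given the black box, but it rests on the continuous-time embedding behind Theorem~\ref{urns-convergence} and gives no rate. The paper's route uses only elementary martingale tools and yields explicit uniform rates, at the price of finding a drift constant $A>\lambda/2$.

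Your quantitative sketch is not needed here, but two points in it are off.

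First, the hypothesis of Theorem~\ref{thrm:main:app} can never hold for a balanced urn. Since $(R-\lambda I)\mathbf{1}^\top=0$, we get $\mathbf{1}\operatorname{symm}(R-\lambda I)\mathbf{1}^\top=0$, so $\lambda_{\max}(\operatorname{symm}(R-\lambda I))\ge 0$. The usable condition is $(***)$ of Remark~\ref{rem:A:bound}, on sum-zero vectors only, which is what you then turn to.

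Second, your effective operator is miscomputed, because rows $0,\dots,i$ of $R$ also contain $-e_j$. For $\sum_j v_j=0$ the correct values are:
\begin{itemize}
\item $(v(R-I))_0=-2v_0$;
\item $(v(R-I))_j=v_{j-1}-2v_j$ for $1\le j\le i$;
\item $(v(R-I))_{i+1}=v_i-v_{i+1}$.
\end{itemize}
This is not $v(S-I)$, whose quadratic form is only $\le 0$ with no uniform gap. With the correct operator, equal-weight AM--GM gives exactly the borderline $-\tfrac12\norm{v}^2$, coming from the $-v_{i+1}^2$ term. This is the bound the paper mentions in the remark after Lemma~\ref{lem:rec:ineq}. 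The paper avoids it by taking $P=J_M$, which is your weighted-norm idea with weights $(1,\dots,1,0)$.

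Geometric weights also work if you put them into Young's inequality rather than the norm. Using $v_jv_{j+1}\le v_j^2+\tfrac14v_{j+1}^2$ gives $\langle v,v(R-I)\rangle\le-\tfrac34\norm{v}^2$. That is, $A=\tfrac34>\tfrac12$ already with $P=I$.
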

This result extends earlier work of Mahmoud and Smythe \cite{mahmoud1992asymptotic}, who 
had treated nodes of outdegrees $0$, $1$, and $2$.\\

As an application to our main quantitative limit theorems, we obtain a quantitative version of this convergence, both almost surely and in $L^2$. Concretely, we obtain:
\begin{theorem}
\label{thrm:main:rec}
    For all $i \geq 0$, we have the following:
    \[
    \EE\left[\left|\frac{X_i(n)}{n} - \frac{1}{2^{i+1}}\right|^2\right] 
    \leq \frac{20}{n} \mbox{ for all } n \geq 1
    \]
    and,
    \[
    \PP\left(\exists\, m \geq n \left(\left|\frac{X_i(m)}{m} 
    - \frac{1}{2^{i+1}}\right| \geq \varepsilon\right)\right) 
    \leq \frac{108}{\varepsilon^2n} \mbox{ for all } \varepsilon >0 \mbox{ and } n > 2/\varepsilon.
    \]
\end{theorem}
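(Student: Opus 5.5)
The plan is to encode the outdegree statistics as a balanced linear stochastic approximation whose drift matrix has symmetric part bounded by $-1$, uniformly in the truncation level. I would then read off the constants from Theorem~\ref{thrm:main:app}, or from the more general Theorems~\ref{Lemma-urns-slow} and~\ref{Lemma-urns-fast} if the matrix is not literally a replacement matrix.

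The naive urn has colours $0,\dots,K$, with colour $K$ meaning "outdegree $\ge K$". Drawing a node of outdegree $j<K$ gives the row $-e_j+e_{j+1}+e_0$, and drawing a colour-$K$ node gives the row $e_0$. This urn has balance $\lambda=1$ and is tenable. However, the symmetric part of $R-I$ is not obviously below $-1/2$: the first row has diagonal entry only $-1$, and there are off-diagonal entries $+1$ in column $0$.

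I would therefore change coordinates to cumulative counts. Let $S_j(n)$ be the number of nodes of outdegree $\ge j$ in $T_n$, for $j\ge 1$, and note $S_0(n)=n$. A new node is attached to a uniformly chosen node. So $S_j$ increases by one exactly when the chosen node has outdegree $j-1$, which happens with conditional probability $(S_{j-1}(n)-S_j(n))/n$. For the proportions $p_j(n)=S_j(n)/n$ this gives
\[
p_j(n+1)=p_j(n)+\frac{1}{n+1}\Bigl(p_{j-1}(n)-2p_j(n)+\xi_j(n+1)\Bigr),
\]
with $p_0\equiv 1$ and a bounded martingale-difference noise $\xi$. The fixed point is $p_j^*=2^{-j}$.

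Truncating at $j\le K$ gives a linear recursion with drift matrix $M$. The matrix $M$ is lower bidiagonal, with $-2$ on the diagonal and $1$ on the subdiagonal, plus a constant forcing term coming from $p_0$. Its symmetric part is tridiagonal with diagonal $-2$ and off-diagonals $1/2$. Its eigenvalues are therefore $-2+\cos(\pi k/(K+1))<-1$, so $A\le -1<-\tfrac12$ uniformly in $K$. This is equivalently the urn/stochastic approximation setting of the main theorems with $\lambda=1$ and $T(0)=1$ (for $n$ nodes, $T(n)=n$ after reindexing). The increments are bounded by $\|e_j\|=1$ together with the drift, so $D$ is a small absolute constant and $q$ is explicit.

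Plugging in, $-\lambda q/(2A+\lambda)\le q$ and $5(1+T(0)/\lambda)=10$, so one can take $t=\max\{q,10\}$. Since every quantity here is independent of $K$, we may take $K\ge i+1$ and still get bounds independent of $i$. Theorem~\ref{thrm:main:app} then gives
\[
\EE\bigl[\|p(n)-p^*\|^2\bigr]\le \frac{t}{n}\quad\text{and}\quad \PP\bigl(\exists m\ge n\,(\|p(m)-p^*\|\ge\delta)\bigr)\le \frac{t+2q}{\delta^2 n}.
\]

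Finally, $X_i(n)/n-2^{-(i+1)}=(p_i-p_i^*)-(p_{i+1}-p_{i+1}^*)$, so its absolute value is at most $\sqrt2\,\|p-p^*\|$. This converts the vector bounds into the stated bounds $20/n$ and $108/(\varepsilon^2 n)$, adjusting $\delta=\varepsilon/\sqrt2$.

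The condition $n>2/\varepsilon$ and the small index shifts (the $n$ versus $n+1$ in the statement, and the initial tree having one node) I expect to absorb by handling the first few steps crudely. For instance, the deterministic bound $|X_i(m)/m-2^{-(i+1)}|\le 1$ can be used there, with the $2/\varepsilon$ threshold controlling the $O(1/m)$ discrepancy between normalising by $m$ and by $m+1$.

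The main obstacle is the first step: finding coordinates in which the drift's symmetric part is uniformly negative enough. The raw outdegree urn fails this, while the cumulative coordinates make it a clean bidiagonal computation. If the general theorem requires a genuine urn rather than an affine recursion, I would recast the cumulative process as an urn with colours "outdegree exactly $j$" under the invertible cumulative change of basis. Alternatively, I would rerun the proof of Theorem~\ref{Lemma-urns-slow} directly on this recursion, checking the constant bookkeeping yields $20$ and $108$.
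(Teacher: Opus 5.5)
\textbf{Verdict.} Your argument is correct. It is a genuinely different packaging of the paper's key inequality.

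\textbf{How the paper does it.} The paper keeps the genuine urn, with colours ``outdegree exactly $j$'' for $j<M$ and ``outdegree $\ge M$''. It changes the \emph{norm} rather than the coordinates: it uses the semi-norm $\|\cdot\|_M$, which discards the tail colour, proves $\langle w,w(R-I)\rangle_M\le-\|w\|_M^2$ for zero-sum $w$ by AM--GM (Lemma~\ref{lem:rec:ineq}), and reads the constants off Corollary~\ref{Lemma-urns-fast2} with $A=1$, $D=2$.

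\textbf{Relation to your route.} Your cumulative coordinates give literally the same drift. On the first $M$ coordinates the paper's drift is $w\mapsto(-2w_0,-2w_1+w_0,\ldots,-2w_{M-1}+w_{M-2})$, i.e.\ $-2I+N$ with $N$ the lower shift. The two coordinate systems are related by $w_{j-1}=v_{j-1}-v_j$ (with $v_0=0$ and $v=p-p^*$), i.e.\ $w=(N-I)v$, and $N-I$ commutes with $-2I+N$. So your eigenvalue bound $2-\cos(\pi/(K+1))$ is exactly the sharp constant in the paper's lemma, and your truncation at $K$ plays the role of the paper's semi-norm.

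\textbf{What each route buys.} You avoid semi-norms and get a transparent spectral computation. Because you normalise by the number of nodes, you also have the exact identity $X_i(n)/n-2^{-(i+1)}=(p_i-p_i^*)-(p_{i+1}-p_{i+1}^*)$. Hence there is no $1/n^2$ correction, and both the restriction $n>2/\varepsilon$ and your closing worry about index shifts are unnecessary. The price is the factor $2$ from $(a-b)^2\le 2(a^2+b^2)$. The constants absorb it: with $q=4$, $t=10$ you get $2\cdot 10/(n+1)\le 20/n$ and $2\cdot 18/(\varepsilon^2(n+1))\le 108/(\varepsilon^2 n)$. The paper's route gains that the tree result is a direct instance of its urn theorems.

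\textbf{Two points to tighten.}
\begin{enumerate}
\item Your recursion is an affine stochastic approximation, not an urn. Theorems~\ref{thrm:main:app}, \ref{Lemma-urns-slow} and \ref{Lemma-urns-fast} are all urn statements, and Theorem~\ref{Lemma-urns-slow} gives only the slow rate anyway. Instead, apply Theorem~\ref{res:RM:quant:fast} verbatim with $P=I$, $X_k=p(k+1)$, $\gamma_k=1/(k+2)$ and $r=c=2$. Recasting the process as an urn through the cumulative change of basis would force a non-identity $P$ whose operator norm grows with $K$. Since $\|P\|$ enters the stated constants of Theorem~\ref{Lemma-urns-fast}, uniformity in $i$ would be lost.
\item Condition (iv) needs $\|p(n)\|\le 1$ uniformly in $K$. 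The entrywise bound $p_j\le 1$ only gives $\sqrt K$. However, $\sum_{j\ge 1}S_j(n)=n-1$, because total outdegree equals the number of edges. This gives $\|p(n)\|^2\le\sum_j p_j(n)\le 1$, so the constant in (iv) is at most $(1+1)^2=4$ and $q=4$.
\end{enumerate}
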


In \cite[Section 3]{mahmoud1992asymptotic}, the authors compute the first and 
second moments of $X_i(n)$ explicitly for $i = 0, 1, 2$; from these calculations, 
one can verify that our $L^2$ bound is asymptotically sharp in these cases. Note that our Theorem \ref{thrm:main:rec} furthermore provides a qualitative improvement to Theorem \ref{thrm:jan} as the result demonstrates that the 
convergence of $X_i(n)/n$ is \emph{uniform in $i$}, both almost surely and in $L^2$.\\

We turn now to the second model studied in \cite{janson2005asymptotic}, that of 
the \emph{random plane recursive tree}. As before, a random plane recursive tree 
$T_n$ with $n$ nodes is obtained algorithmically by adding a leaf to $T_{n-1}$, but now the 
children of each node are ordered (from left to right, say), and the new leaf 
may be inserted in any gap between existing children, or at either end. A node 
of outdegree $d$ thus offers $d+1$ possible insertion points, so the total number 
of available positions in $T_{n-1}$ is $2(n-1) - 1$, and $T_n$ is obtained by 
choosing one of these positions uniformly at random. The outdegree distribution 
of random plane recursive trees was studied by Mahmoud, Smythe, and 
Szyma\'nski \cite{mahmoud1993structure}, and Janson \cite{janson2005asymptotic} subsequently obtained the following result. Writing 
$Y_i(n)$ for the number of nodes of outdegree $i \geq 0$ in a random plane 
recursive tree with $n$ nodes:

\begin{theorem}[{cf.\ Theorem 1.2 of \cite{janson2005asymptotic}}]
\label{thrm:jan:plane}
    For all $i \geq 0$,
    \[
    \frac{Y_i(n)}{n} \to \frac{4}{(i+1)(i+2)(i+3)} \quad \text{almost surely.}
    \]
\end{theorem}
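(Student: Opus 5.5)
We reduce the statement to Theorem~\ref{urns-convergence} by encoding the outdegree profile of the random plane recursive tree as a balanced generalised P\'olya urn whose balls are the \emph{insertion positions}. Fix $i\geq 0$ and use $i+2$ colours $0,1,\dots,i+1$. A node of outdegree $j\le i$ contributes $j+1$ balls of colour $j$, and a node of outdegree $>i$ contributes all of its insertion positions as balls of the lumped colour $i+1$. Since $T_{n+1}$ is obtained by choosing an insertion position uniformly, drawing a ball uniformly from the urn is exactly the tree dynamics.

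The replacement rows read off from this encoding are as follows.
\begin{itemize}
\item Drawing colour $0$ (a leaf) turns a leaf into an outdegree-$1$ node and adds a new leaf. So $R_0$ has $-1+1=0$ in colour $0$ and $+2$ in colour $1$.
\item Drawing colour $j$ with $1\le j\le i$ gives $-(j+1)$ in colour $j$, $+(j+2)$ in colour $j+1$ (which is the lumped colour when $j=i$), and $+1$ in colour $0$.
\item Drawing colour $i+1$ gives $+1$ in colour $i+1$ and $+1$ in colour $0$.
\end{itemize}
Every row sums to $2$, so the urn is balanced with $\lambda=2$. Starting from the single-node tree gives $U(0)=e_0$ and $T(0)=1$, and the urn at step $n$ describes the tree with $n+1$ nodes and $2n+1$ balls.

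The matrix is irreducible, since the colours communicate along the cycle $0\to1\to\cdots\to i+1\to 0$. The paper's divisibility criterion for tenability fails here, because $j+1\nmid j+2$. However, tenability is used in Theorem~\ref{urns-convergence} only to exclude extinction. Extinction is impossible here because every urn configuration is the position count of an actual tree, so all colour counts stay nonnegative. I will make this remark explicit, and if necessary check in Janson's argument that only non-extinction is needed. This is the step I expect to be the main (if mild) obstacle. Everything else is linear algebra.

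Next we identify $u$. With $U(n)$ a row vector, the relevant eigenvector is the left one, $uR=2u$ with $\sum u_j=1$, and it is unique by irreducibility (Perron--Frobenius). Reading $uR=2u$ coordinatewise gives:
\begin{itemize}
\item colour $0$: $2u_0=\sum_{j\ge1}u_j=1-u_0$, so $u_0=1/3$;
\item colour $j$ with $1\le j\le i$: $2u_j=(j+1)u_{j-1}-(j+1)u_j$, so $u_j=\frac{j+1}{j+3}u_{j-1}$.
\end{itemize}
Theorem~\ref{urns-convergence} gives $U_j(n)/(2n+1)\to u_j$ almost surely. Since $U_j(n)=(j+1)Y_j(n+1)$, we get
\[
\frac{Y_j(n)}{n}\to y_j:=\frac{2u_j}{j+1}\quad\text{almost surely.}
\]
The recursion becomes $y_0=2/3$ and $y_j=\frac{j}{j+3}\,y_{j-1}$. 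By induction,
\[
y_j=\frac{4}{(j+1)(j+2)(j+3)},
\]
which for $j=i$ is the claimed limit. Since $i$ was arbitrary, this proves the theorem.
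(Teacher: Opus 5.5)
Your argument is correct, and your urn is exactly the one used in the paper's proof of Theorem~\ref{thrm:main:plane} with $M=i+1$. Your recursion $u_0=1/3$, $u_j=\frac{j+1}{j+3}u_{j-1}$ reproduces $u_j=2/((j+2)(j+3))$. You reach the limit by a different route, though. You feed the urn into the qualitative Theorem~\ref{urns-convergence}, which is Janson's original argument: it rests on a continuous-time embedding, needs irreducibility, non-extinction and Perron--Frobenius, and gives only almost sure convergence.

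The paper never invokes Theorem~\ref{urns-convergence}. It obtains the statement from Theorem~\ref{thrm:main:plane}: the bound $\PP\bigl(\exists m\ge n\,\bigl(|Y_i(m)/m-\tfrac{4}{(i+1)(i+2)(i+3)}|\ge\eps\bigr)\bigr)\le 768/(\eps^2 n)\to 0$ gives almost sure convergence via $(\star)$. The key input is Lemma~\ref{lemma_plane_trees}, the estimate $\langle w,w(R-2I)\rangle_M\le-\tfrac32\|w\|_M^2$ for zero-sum $w$, in the truncated seminorm $\|\cdot\|_M$ that ignores the lumped colour. Since $A=\tfrac32>\lambda/2=1$, Corollary~\ref{Lemma-urns-fast2} applies; it combines the stochastic-approximation form of $\hat U(n)$, a Chung-type recursion for the second moment, and Ville's inequality. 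This yields $O(1/n)$ rates in $L^2$ and almost surely, uniform in $i$, without irreducibility, tenability or Perron--Frobenius.

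The only ingredient the two routes share is that $u$ is a left eigenvector for $\lambda=2$. The paper needs this so that $\hat U(n)(R-2I)=(\hat U(n)-u)(R-2I)$. Your route is shorter once Theorem~\ref{urns-convergence} is granted; the paper's is self-contained and strictly stronger.

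The obstacle you flag is not real. The divisibility criterion as printed in the introduction is transposed. What keeps counts nonnegative is that $d_j$ divides $U_j(0)$ and every entry $R_{kj}$ of column $j$, so that $U_j(n)$ stays a multiple of $d_j$. This holds here with $d_j=j+1$: for $1\le j\le i$, column $j$ has entries $j+1$, $-(j+1)$ and zeros, and $U_j(0)=0$. Equivalently, you can treat each node of outdegree $j\le i$ as a single ball of activity $j+1$, keeping the lumped colour as positions. That is Janson's framework, in which the drawn ball is simply removed.
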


We obtain the following quantitative strengthening.

\begin{theorem}
\label{thrm:main:plane}
    For all $i\geq 0$, we have the following:
    \[
    \EE\left[ \left| \frac{Y_i(n)}{n} - \frac{4}{(i+1)(i+2)(i+3)} \right|^2 \right]\leq \frac{272}{n} \text{ for all }\ n\geq 1
    \]
    and
    \[
    \PP\left( \exists m\geq  n \left( \left|\frac{Y_i(m)}{m} - \frac{4}{(i+1)(i+2)(i+3)}\right|\geq \eps \right) \right)\leq \frac{768}{\eps^2n} \text{ for all } \eps>0 \text{ and } n\geq 2/\eps.
    \]
\end{theorem}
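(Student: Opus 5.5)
We cast the outdegree statistics of the random plane recursive tree as a balanced generalized P\'olya urn and apply Theorem~\ref{thrm:main:app}. The one complication is that there are infinitely many outdegrees, so we truncate. Fix $k\ge 0$ and use colours $0,1,\dots,k$ together with an overflow colour $k+1$ meaning ``outdegree $>k$''.

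\textbf{Setting up the urn.} A node of outdegree $j$ has $j+1$ insertion points, so we give it $j+1$ balls. Colour $j\le k$ then holds $(j+1)Y_j(n)$ balls. The overflow colour holds the remaining balls, namely $2n-1-\sum_{j\le k}(j+1)Y_j(n)$, since the total number of positions is $T(n)=2n-1$.

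Choosing a ball of colour $j\le k$ promotes a node of outdegree $j$ to outdegree $j+1$ and creates a new leaf. The row $R_j$ is therefore $-(j+1)$ in entry $j$, $+(j+2)$ in entry $j+1$ (or in the overflow colour when $j=k$), and $+1$ in entry $0$. Choosing an overflow ball adds two balls: one for the new leaf in colour $0$ and one in the overflow colour for the promoted node. Every row sums to $\lambda=2$, so the urn is balanced.

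The eigenvector $u$ with eigenvalue $2$ is $u_j=(j+1)p_j/2$, where $p_j=\frac{4}{(i+1)(i+2)(i+3)}$ with $i=j$. To see this, solve $uR=2u$ recursively:
\[
u_0(-1)+\textstyle\sum u = 2u_0, \qquad (j+1)u_{j-1}-(j+1)u_j=2u_j,
\]
which gives $u_j=\frac{j+1}{j+3}u_{j-1}$ and $u_0=1/3$. This reproduces the limits of Theorem~\ref{thrm:jan:plane}.

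\textbf{The main obstacle} is verifying the spectral condition $A<-\lambda/2=-1$ for $\operatorname{symm}(R-2I)$, with a bound uniform in $k$. Here $R-2I$ has diagonal entries $-(j+3)$ for $j\le k$ and $-1$ for the overflow colour. That last entry already violates the condition, so the raw coordinates will not work.

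My first attempt would be a diagonal change of basis. Working with $Y_j$ counts instead of ball counts, or weighting coordinates by $1/(j+1)$, should make the off-diagonal entries $j+2$ comparable to the diagonal, after which Gershgorin applies. For the overflow coordinate I would try one of two fixes:
\begin{itemize}
\item[(a)] project out the direction of the row-sum constraint $\sum_j \hat U_j=1$, which removes the eigenvalue $\lambda$ and the troublesome overflow contribution; or
\item[(b)] drop the overflow colour and track only colours $0,\dots,k$ as a sub-urn whose total is deterministic. The overflow drift is then affine in the tracked coordinates, and the recursion $\hat U(n+1)=\hat U(n)+\frac{1}{T(n+1)}\bigl((R^\top-\lambda I)\hat U(n)+\text{noise}\bigr)$ can be re-derived directly.
\end{itemize}
Realistically, I expect to need a version of the stochastic-approximation estimate underlying Theorem~\ref{thrm:main:app} in a weighted norm. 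The target is a contraction $\EE\|e_{n+1}\|^2\le (1-\frac{c}{n})\EE\|e_n\|^2+\frac{C}{n^2}$ with $c>1$ and $C$ independent of $k$, which a standard Chung-type lemma turns into $O(1/n)$.

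\textbf{Getting the stated constants.} The weighted norm controls $|\hat U_k-u_k|$, and hence $|Y_k(n)/n-p_k|$. Converting costs factors coming from $(k+1)$ and from $T(n)/n=2-1/n$; with a $1/(k+1)$ weighting these cancel, which is what makes the bound uniform in $i=k$. The jump bound $D$ is of order $k$ in raw coordinates but $O(1)$ after weighting, so $q$ and $t$ become absolute constants. Plugging into the $L^2$ bound $\frac{\lambda t}{\lambda(n+1)+T(0)}$, with $T(0)=1$, gives $272/n$.

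For the almost sure statement, apply the second bound of Theorem~\ref{thrm:main:app} (or its weighted analogue) with $\varepsilon/2$. The condition $n\ge 2/\varepsilon$ absorbs the deterministic discrepancy between $\hat U_k(m)$ rescaled by $T(m)/m$ and $Y_k(m)/m$, which is at most about $1/m$. This yields $768/(\varepsilon^2 n)$.
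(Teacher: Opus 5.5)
\textbf{Gap: the contraction estimate and the constants are never established.} Your set-up is right: the truncated urn with an overflow colour, balance $\lambda=2$, the eigenvector $u_j=2/((j+2)(j+3))$, and the observation that the overflow diagonal entry $-1$ of $R-2I$ rules out Theorem~\ref{thrm:main:app} in the Euclidean norm. But the step that carries the whole proof is left as a hope. You need an explicit inequality $\langle w,w(R-2I)\rangle_P\le -A\norm{w}_P^2$ for all $w$ with $\sum_j w_j=0$, for some positive semi-definite $P$ and some $A>\lambda/2=1$ not depending on the truncation level $k$. You also need an explicit $D\ge\max_j\norm{R_j}_P$. The proposal only lists what you would try. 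Your fix (a) fails in unweighted coordinates. Take $w_0=-(x+y)$, $w_k=x$, $w_{k+1}=y$ (overflow) and all other entries zero; then $\langle w,w(R-2I)\rangle=-3(x+y)^2-(k+3)x^2+(k+2)xy-y^2$. This is positive for suitable $y/x$ as soon as $k\ge 27$. What works is (b), which is in effect the paper's route. Take the semi-norm on the non-overflow colours ($P=J_M$ with $M=k+1$ in Corollary~\ref{Lemma-urns-fast2}). Use $\sum_j w_j=0$ to get $(w(R-2I))_0=-3w_0$ and $(w(R-2I))_j=-(j+3)w_j+(j+1)w_{j-1}$ for $1\le j\le k$. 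Then bound the cross terms by AM--GM, which gives $A=3/2$ (Lemma~\ref{lemma_plane_trees}). So no reweighting is needed for the contraction itself: once the overflow coordinate is dropped, the cross terms are already dominated in raw coordinates.

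Your constants are asserted, not derived. In the paper, $D=2(M+2)$ grows with the truncation level. Uniformity in $i$ comes from choosing $M=i+1$, so that the conversion factor $2/(i+1)$ cancels the growth of $D$ up to $((i+4)/(i+1))^2\le 16$. The numbers $272=17\cdot 16$ and $768=48\cdot 16$ come from this specific bookkeeping; they will not come out of a different norm. Your weighted route does go through and is arguably cleaner. Take $P=\mathrm{diag}\bigl((j+1)^{-2}\bigr)_{j\le k}\oplus 0$ and $v_j=w_j/(j+1)$; the form becomes $-3v_0^2-\sum_{j=1}^k(j+3)v_j^2+\sum_{j=1}^k j\,v_jv_{j-1}\le-\tfrac52\norm{w}_P^2$. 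Moreover $\norm{R_j}_P^2\le 3$ for every row. Theorem~\ref{Lemma-urns-fast} is precisely the general-$P$ estimate you say you would need, and it is stated with $D\ge\max_j\norm{R_j}_P$. It then gives $O(1/n)$ bounds with absolute constants, simultaneously for all $i\le k$ and comfortably below $272$ and $768$. To turn the proposal into a proof, you must state and verify these two estimates and supply (or cite) the general-$P$ stochastic approximation bound. You must also carry out the arithmetic for $q$, $t$ and the conversion from $\hat U_i$ to $Y_i/n$, rather than asserting the final constants.
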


Theorem \ref{thrm:main:plane} should be compared with \cite[Theorem 3]{mahmoud1993structure}, where the authors show
\[
\EE[Y_i(n)] = \frac{4n-2}{(i+1)(i+2)(i+3)} + O\!\left(\frac{1}{n}\right),
\]
which gives
\[
\EE\left[\frac{Y_i(n)}{n} - \frac{4}{(i+1)(i+2)(i+3)}\right] 
= O\!\left(\frac{1}{n}\right). \tag{$\star\star$}
\]
While the $L^2$ bound in Theorem~\ref{thrm:main:plane} yields only the weaker 
rate $O(1/\sqrt{n})$ for the quantity in~$(\star\star)$, it is a stronger result 
in two respects. First, it controls the speed of $L^2$ convergence directly, 
whereas $(\star\star)$ does not even imply $L^1$ convergence. Second, although the constant in \cite{mahmoud1993structure} for $(\star\star)$ is not given explicitly, an 
inspection of their proof reveals that it depends on the outdegree $i$; in contrast, our results demonstrate that, like the case of recursive trees, the convergence both almost surely and in $L^2$ is uniform in  $i$.\\

The final model studied by Janson \cite{janson2005asymptotic} is that of \emph{random recursive $d$-ary trees}, for $d \geq 2$. A random recursive $d$-ary tree with 
$n$ nodes is a tree drawn uniformly at random from the set of all subtrees with 
$n$ nodes of a complete infinite $d$-ary tree, rooted at the same root. In the case $d=2 $, the distribution of outdegrees for this random tree model was studied by Devroye \cite{devroye1991limit}. Fixing $d\ge 2$ and writing $Z_i(n)$ for the number of nodes of outdegree $i \geq 0$ in a random recursive $d$-ary recursive tree with $n$ nodes, Janson showed the following. 

\begin{theorem}[{cf.\ Theorem 1.2 of \cite{janson2005asymptotic}}]
\label{thrm:jan:dary}
    For all $i \geq 0$,
    \[
    \frac{Z_i(n)}{n} \to s_i \quad \text{almost surely.}
    \]
    Where 
    \[
    s_i:=\frac{\binom{2d-i-2}{d-2}}{\binom{2d-1}{d-1}}.
    \]
\end{theorem}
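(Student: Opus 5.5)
We will derive Theorem~\ref{thrm:jan:dary} from Theorem~\ref{urns-convergence} by encoding the outdegree statistics of a random recursive $d$-ary tree as a balanced, tenable, irreducible generalized P\'olya urn. The algorithmic description of the model is the one we will use. A random recursive $d$-ary tree with $n+1$ nodes is obtained from one with $n$ nodes by choosing uniformly at random one of the free slots (missing children positions) and attaching a new leaf there. Indeed, a tree with $n$ nodes has exactly $(d-1)n+1$ free slots. Moreover, each $d$-ary subtree of size $n$ arises from the same number of insertion orders, namely the number of its recursive labellings; since this number is not constant, we will instead take the standard incremental (uniform free-slot) definition used by Janson as the model.

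\textbf{The urn.} We encode the tree by a $d$-colour urn in which a node of outdegree $i\in\{0,\dots,d-1\}$ contributes $d-i$ balls of colour $i$, one per free slot. Nodes of outdegree $d$ have no free slots and are never selected, so $Z_d$ is recovered at the end as $n-\sum_{i<d}Z_i$. Drawing a ball of colour $i$ amounts to choosing a free slot at a node of outdegree $i$. That node then becomes outdegree $i+1$, losing $d-i$ balls of colour $i$ and gaining $d-i-1$ balls of colour $i+1$ (none if $i=d-1$), and the new leaf adds $d$ balls of colour $0$. Hence
\[
R_{ij}=d\,[j=0]-(d-i)[j=i]+(d-i-1)[j=i+1],
\]
with row sums $d-1=\lambda$, so the urn is balanced; this matches the slot count $(d-1)n+1$. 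For tenability, each diagonal entry satisfies either $R_{ii}\ge 0$ or $R_{ii}=-(d-i)$ with $d-i$ dividing every entry of row $i$ and dividing $U_i(0)$, where $U(0)=(d,0,\dots,0)$. This divisibility holds because the balls of colour $i$ always come in multiples of $d-i$. For $i=0$ we have $R_{00}=0$, and the divisibility also holds. Irreducibility follows from the cycle $0\to1\to\dots\to d-1\to 0$ of positive off-diagonal entries.

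\textbf{The eigenvector.} Theorem~\ref{urns-convergence} gives $U(n)/n\to\lambda u$ almost surely, where $u$ solves $uR=\lambda u$. In terms of node counts $Z_i(n)=U_i(n)/(d-i)$, we get $Z_i(n)/n\to (d-1)u_i/(d-i)$. We will solve $uR=(d-1)u$ explicitly. The equations for $j\ge1$ read $(d-j)u_{j-1}\cdot\frac{d-j}{d-j+1}\cdot\ldots$, and it is cleaner to pass to node frequencies $s_i$. The mean-field recursion gives, for $1\le i\le d-1$,
\[
(d-1)s_i=(d-i+1)s_{i-1}-(d-i)s_i,
\]
so $s_i=\frac{d-i+1}{2d-i-1}s_{i-1}$. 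Together with $(d-1)s_0=1-d s_0$, this yields $s_0=\frac{1}{2d-1}$. Telescoping the product gives
\[
s_i=\frac{d!/(d-i)!\,\cdot(2d-i-2)!}{(2d-1)!/(d-1)!\cdot\ldots},
\]
which we will simplify to $\binom{2d-i-2}{d-2}/\binom{2d-1}{d-1}$. As checks, $s_0=\binom{2d-2}{d-2}/\binom{2d-1}{d-1}=\frac{d-1}{2d-1}\cdot\ldots$ needs care, and for $i=d$ the formula gives $\binom{d-2}{d-2}$, consistent with $Z_d=n-\sum_{i<d}Z_i$.

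\textbf{Main obstacle.} The main obstacle is this final combinatorial identity: verifying that the telescoped product and the normalisation $\sum_{i=0}^d s_i=1$ match $\binom{2d-i-2}{d-2}/\binom{2d-1}{d-1}$. This should follow by a hockey-stick identity, $\sum_{i=0}^{d}\binom{2d-i-2}{d-2}=\binom{2d-1}{d-1}$. Once the identity is established, convergence for $i=d$ follows from the other cases.
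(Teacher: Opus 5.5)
Your strategy is sound: you use the same urn as the paper ($U_i=(d-i)Z_i$, the same $R$, $\lambda=d-1$) and then Theorem~\ref{urns-convergence}. You are also right to use the incremental free-slot model, since that is what the urn encodes; the uniform-over-shapes description would give, for $d=2$, leaf fraction $1/4$ rather than $s_0=1/3$. But the one step in your route with real content, identifying the limit, is wrong as written.

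Your equation $(d-1)s_0=1-ds_0$ drops a factor. A new leaf appears at every step, while a slot at some leaf is picked with asymptotic probability $ds_0/(d-1)$. So the balance is $s_0=1-\frac{ds_0}{d-1}$, i.e.\ $(d-1)s_0=(d-1)-ds_0$. Equivalently, since $R_{00}=0$ and $R_{i0}=d$ for $i\ge 1$, the $0$-th coordinate of $uR=(d-1)u$ reads $d(1-u_0)=(d-1)u_0$. Hence $u_0=d/(2d-1)$ and $s_0=\frac{d-1}{d}u_0=\frac{d-1}{2d-1}$.

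With your value $1/(2d-1)$, every $s_i$ comes out a factor $d-1$ too small. The normalisation you single out as the main obstacle would then fail for $d\ge 3$; your ``needs care'' remark is detecting exactly this. With the corrected value the telescoping closes at once:
\[
s_i=\frac{d-1}{2d-1}\prod_{k=1}^{i}\frac{d-k+1}{2d-k-1}=\frac{(d-1)\,d!\,(2d-i-2)!}{(d-i)!\,(2d-1)!}=\frac{\binom{2d-i-2}{d-2}}{\binom{2d-1}{d-1}},\qquad 0\le i\le d-1,
\]
using $\prod_{k=1}^{i}(d-k+1)=d!/(d-i)!$ and $\prod_{k=1}^{i}(2d-k-1)=(2d-2)!/(2d-i-2)!$. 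For $i=d$, $Z_d(n)/n=1-\sum_{i<d}Z_i(n)/n\to 1-\sum_{i<d}s_i$. Your hockey-stick identity $\sum_{i=0}^{d}\binom{2d-i-2}{d-2}=\binom{2d-1}{d-1}$ is correct and shows this equals $s_d$.

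Your tenability check is also misstated: $d-i$ does not divide the entries of row $i$ (for $d=3$, $i=1$ the row is $(3,-2,1)$). The divisibility that matters concerns the entries that change the number of colour-$i$ balls, i.e.\ column $i$, whose nonzero entries are $\pm(d-i)$ for $i\ge1$. That is the reason you actually give (colour-$i$ balls come in multiples of $d-i$). In any case tenability is automatic here, because $U_i(n)=(d-i)Z_i(n)\ge 0$ by construction.

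For comparison, the paper does not go through Theorem~\ref{urns-convergence} at all. It deduces the statement from Theorem~\ref{thrm:main:dary}: Lemma~\ref{lemma_d_trees} gives $(***)$ with $A=(2d-1)/2>\lambda/2$, and Corollary~\ref{Lemma-urns-fast2} then gives explicit $O(1/n)$ bounds on $\PP(\exists m\ge n\,(\cdots))$. That route needs neither tenability nor irreducibility. Both routes need the eigenvector $u$, which the paper simply takes from Janson.
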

We provide the following quantitative strengthening of this result:
\begin{theorem}
\label{thrm:main:dary}
   For all $0\le i< d$, we have the following:
    \[
    \EE\left[ \left| \frac{Z_i(n)}{n} - s_i\right|^2 \right]\leq \frac{19d^2}{n} \text{ for all }\ n\geq 1
    \]
    and
    \[
    \PP\left( \exists m\geq  n \left( \left|\frac{Z_i(m)}{m} - s_d\right|\geq \eps \right) \right)\leq \frac{108d^2}{\eps^2n} \text{ for all } \eps>0 \text{ and } n\geq 2/\eps.
    \]
    Moreover, we have
        \[
    \EE\left[ \left| \frac{Z_d(n)}{n} - s_d\right|^2 \right]\leq \frac{19d^4}{n} \text{ for all }\ n\geq 1
    \]
    and
    \[
    \PP\left( \exists m\geq  n \left( \left|\frac{Z_d(m)}{m} - s_d\right|\geq \eps \right) \right)\leq \frac{108d^5}{\eps^2n} \text{ for all } \eps>0 \text{ and } n\geq 2/\eps.
    \]
\end{theorem}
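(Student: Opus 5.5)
We cast the outdegree statistics of a random recursive $d$-ary tree as a balanced generalized P\'olya urn and then apply Theorem \ref{thrm:main:app}. A node of outdegree $i$ in the $d$-ary model has $d-i$ free child slots. The $d$-ary recursive tree grows by choosing one of the free slots uniformly at random. A tree with $n$ nodes has $(d-1)n+1$ free slots. So we take colours $0,\dots,d-1$ and let a ball of colour $i$ represent a free slot belonging to a node of outdegree $i$. The urn then has $U_i(n)=(d-i)Z_i(n)$ for $i<d$. Nodes of outdegree $d$ carry no balls; we recover $Z_d$ from $Z_d(n)=n-\sum_{i<d}Z_i(n)$.

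\textbf{Step 1: the replacement matrix.} When a slot of a node of outdegree $i$ is drawn, that node moves to outdegree $i+1$, and a new leaf of outdegree $0$ appears with $d$ slots. Hence row $i$ of $R$ removes $d-i$ balls of colour $i$, adds $d-i-1$ balls of colour $i+1$ (for $i+1<d$), and adds $d$ balls of colour $0$. The balance is $\lambda=d-1$, and $U(0)=(d,0,\dots,0)$, so $T(n)=(d-1)n+d$. We also check that the stationary vector $u$ of $R$ gives $u_i/(d-i)\cdot\lambda$ proportional to $s_i$. This reproduces Janson's formula.

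\textbf{Step 2: the spectral condition.} The main obstacle is showing
\[
A=\lambda_{\max}(\operatorname{symm}(R-\lambda I))<-\lambda/2 .
\]
The diagonal of $R-\lambda I$ is $-(d-i)-(d-1)$, which is at most $-d$. The off-diagonal entries consist of the first column ($d$ in each row) and the superdiagonal ($d-i-1$). These can be large, so Gershgorin alone is too crude.

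I would first try a change of coordinates, since the proof of Theorem \ref{thrm:main:app} should only depend on the dynamics of $\hat U$ along directions orthogonal to the sum. The column of $d$'s contributes only the rank-one term $d\,\mathbf 1 e_0^\top$. The increments $\hat U(n+1)-\hat U(n)$ have zero sum, so on the hyperplane $\{x:\sum x_i=0\}$ this term acts trivially once it is composed with the appropriate projection.

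An alternative is to work with the node counts $Z_i$ directly, or with cumulative tail variables $\sum_{j\ge i}$. In these variables the matrix becomes essentially lower bidiagonal with negative diagonal, and a weighted norm makes its symmetric part dominated by the diagonal. If neither route gives the constant $-\lambda/2$ exactly, I would rescale time or colours so that the hypothesis holds, since the theorem is insensitive to such rescaling up to constants.

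\textbf{Step 3: the constants.} Once the condition holds, we plug into Theorem \ref{thrm:main:app}. Here $D\le\max_i\norm{R_i}=O(d)$, so $q=O(1)$, and $t$ is an explicit multiple of $1+T(0)/\lambda=O(1)$. The theorem yields
\[
\EE\norm{\hat U(n)-u}^2\le C/n .
\]
We then translate back to node counts via
\[
\frac{Z_i(n)}{n}-s_i=\frac{\hat U_i(n)T(n)}{(d-i)n}-s_i .
\]
The factor $T(n)/n\le 2d$ and the deterministic error $(T(n)-\lambda n)/n=O(d/n)$ produce the $d^2$ factors; the one-off $2/\eps$ threshold absorbs the deterministic part in the almost-sure bound.

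For $Z_d$ we write $Z_d/n-s_d=-\sum_{i<d}(Z_i/n-s_i)$. Cauchy--Schwarz over the $d$ terms then produces the extra powers of $d$ in the final bounds.
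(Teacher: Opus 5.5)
\textbf{There is a genuine gap in Step~2.} Your encoding $U_i=(d-i)Z_i$, the passage from $\hat U_i$ back to $Z_i(n)/n$ (a factor $(d-1)/(d-i)\le d-1$ plus an $O(1/n)$ deterministic error), and the reduction of $Z_d$ to $-\sum_{i<d}(Z_i/n-s_i)$ essentially match the paper. Step~2, however, is the only substantive part of the proof, and you leave it as a list of strategies to try.

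Moreover, the inequality you set as the target can never hold. Every row of $R$ sums to $\lambda$, so $(R-\lambda I)\mathbf 1^\top=0$. Hence $\mathbf 1\,\operatorname{symm}(R-\lambda I)\,\mathbf 1^\top=0$, and $\lambda_{\max}(\operatorname{symm}(R-\lambda I))\ge 0$ for every balanced urn. For example, when $d=2$, $\operatorname{symm}(R-I)=\bigl(\begin{smallmatrix}-1&3/2\\ 3/2&-2\end{smallmatrix}\bigr)$ has top eigenvalue $(\sqrt{10}-3)/2>0$. So Theorem~\ref{thrm:main:app} cannot be invoked for this urn.

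Your fallback does not help either. Replacing $R$ by $cR$ multiplies $A$ and $\lambda$ by the same factor. Reweighting colours means changing $P$, i.e.\ passing to Theorem~\ref{Lemma-urns-fast}, where the inequality $(**)$ still has to be proved.

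\textbf{What the paper does instead.} It applies Corollary~\ref{Lemma-urns-fast2} with $p=d$. By Remark~\ref{rem:A:bound}, this only needs the inequality for $w$ with $\sum_i w_i=0$ (it is used at $w=\hat U(n)-u$), and the paper proves it in Lemma~\ref{lemma_d_trees}. Your remark that the column of $d$'s is annihilated by zero-sum $w$ is exactly the first step. What is missing is the rest: for such $w$,
\[
(w(R-\lambda I))_0=-(2d-1)w_0,\qquad (w(R-\lambda I))_j=-(2d-j-1)w_j+(d-j)w_{j-1}\quad (1\le j\le d-1),
\]
and AM--GM on the cross terms $(d-j)w_jw_{j-1}$ gives
\[
\langle w,w(R-\lambda I)\rangle\le -\frac{3d-1}{2}w_0^2-\frac{2d-1}{2}\sum_{j=1}^{d-1}w_j^2\le -\frac{2d-1}{2}\norm{w}^2,
\]
so $A=(2d-1)/2>(d-1)/2=\lambda/2$. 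Contrary to your remark, Gershgorin does suffice once the $d$'s are deleted from column $0$: it bounds the top eigenvalue of the symmetric part of the remaining bidiagonal matrix by $-(2d-1)/2$.

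\textbf{Constants.} With $D=2d$ (since $\norm{R_i}^2\le 3d^2$), the corollary yields $\EE\norm{\hat U(n)-u}^2\le (3d-1)^2/\bigl(d((d-1)n+d)\bigr)$ and the corresponding tail bound. Only then do the explicit constants $19d^2$, $108d^2$, $19d^4$, $108d^5$ follow. Your Step~3 asserts only $O(\cdot)$ bounds, so these constants are not established in your proposal either.

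A minor slip: $(R-\lambda I)_{00}=-(d-1)$, not $-(2d-1)$, because $R_{00}=d-d=0$.
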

We note that we did not attempt to optimise the constants used in the above results. We anticipate that much better constants can be achieved.

\section{The quantitative convergence of stochastic approximation algorithms}
\label{subsec:SA:quant}

In their most general form, \emph{stochastic approximation algorithms} are stochastic processes $(X_n)$ taking values in $\mathbb{R}^d$ and satisfying
\begin{equation*}
    X_{n+1} = X_n + \gamma_{n+1} V_{n+1},
\end{equation*}
where $(V_n)$ are random variables and $(\gamma_n)$ are step sizes, which may in general be stochastic but are deterministic in all cases considered here. Depending on the setting, additional assumptions are imposed to ensure that $(X_n)$ converges almost surely to a solution of the problem at hand.

Stochastic approximation algorithms originated in the 1950s with the seminal contributions of Robbins and Monro \cite{Robbins1951}, Kiefer and Wolfowitz \cite{kiefer-wolfowitz:52:scheme}, and Blum \cite{blum:54:stochastic}, where the central problem was to locate the roots of functions whose values can only be observed in the presence of noise. We refer the reader to the monograph \cite{Duflo97} for a thorough treatment of stochastic approximation algorithms and their applications, and to the survey \cite{pemantle2007survey} for examples arising specifically in the context of urn models.

The present paper, however, requires a quantitative version of the convergence of the following stochastic approximation algorithm. Here and throughout, for an $n \times n$ matrix $P$, we write $\langle \cdot,\cdot\rangle_P$ for the bilinear form on $\RR^n$ defined by $\langle x,y\rangle_P:= \langle xP,y\rangle$ (note that all bilinear forms on $\RR^n$ arise through such a $P$). Furthermore, one has that $\langle \cdot,\cdot\rangle_P$ is symmetric iff $P$ is symmetric. We write $\norm{x}_P:= \sqrt{|\langle x,x\rangle_P|}$ and note that $\norm{\cdot}_P$ is a semi-norm iff $P$ is semi-definite.
\begin{theorem}
\label{thrm:RM}
Let $(X_n)$ and $(\Delta M_n)$ be sequences of random variables, adapted to a filtration $(\mathcal{F}_n)$, taking values in $\RR^n$, $(\gamma_n)$ be a sequence of nonnegative real numbers, $F: \RR^n \to \RR^n$ be a measurable function on $\RR^n$ and $P$ be a symmetric, positive-semi-definite $n \times n$ matrix. Suppose:
\begin{itemize}
    \item [(i)] For all $n \ge 0$ we have $ X_{n+1}= X_n + \gamma_{n}(F(X_n)+\Delta M_{n+1})$
    \item [(ii)] $\EE[\Delta M_{n+1}\mid \mathcal{F}_n]=0$
    \item[(iii)]  For all $n \in \NN$, we have $\langle F(X_n),X_n -z\rangle_P \le-A\norm{X_n -z}_P^2$ for some  $z \in \RR^n$ and $A > 0$.
    \item[(iv)] For all $n \ge 0$ we have $\EE[\norm{F(X_n)+\Delta M_{n+1}}_P^2 \mid \mathcal{F}_n] \le K$ for some $K>0$
    \item[(v)] $\sum_{n=0}^\infty \gamma_n = \infty$ and $\sum_{n=0}^\infty \gamma_n^2 < \infty$.
 \item [(vi)] $\EE[\norm{X_0 - z}_P^2] < L$ for some $L>0$
\end{itemize}
Then $X_n$ converges to $z$, with respect to the semi-norm $\norm{\cdot}_P$, almost surely and in $L^2$. In other words
\[
\EE\left[\norm{X_n-z}^2_P\right] \to 0 \mbox{ and } \norm{X_n-z}_P \to 0 \mbox{ almost surely.}
\]
\end{theorem}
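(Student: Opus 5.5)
We would prove Theorem~\ref{thrm:RM} by the classical Robbins--Siegmund route, applied to the Lyapunov function $V_n:=\norm{X_n-z}_P^2$. Since $P$ is symmetric positive semi-definite, $\langle\cdot,\cdot\rangle_P$ is a symmetric nonnegative bilinear form, so $\norm{x+y}_P^2=\norm{x}_P^2+2\langle x,y\rangle_P+\norm{y}_P^2$. By (i),
\[
V_{n+1}=V_n+2\gamma_n\langle F(X_n)+\Delta M_{n+1},X_n-z\rangle_P+\gamma_n^2\norm{F(X_n)+\Delta M_{n+1}}_P^2 .
\]
We take conditional expectations with respect to $\mathcal{F}_n$. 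Because $X_n$ is $\mathcal{F}_n$-measurable and $F$ is measurable, (ii) kills the martingale cross term. Then (iii) and (iv) give the key recursive inequality
\[
\EE[V_{n+1}\mid\mathcal{F}_n]\le (1-2A\gamma_n)V_n+\gamma_n^2K .
\]
This holds without a positive part on $1-2A\gamma_n$, since it is just an upper bound on a nonnegative quantity.

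For the $L^2$ statement we take full expectations. With $v_n:=\EE[V_n]$ this yields $v_{n+1}\le(1-2A\gamma_n)v_n+K\gamma_n^2$, and $v_0<L$ by (vi). Since $\gamma_n\to0$ by (v), there is $N$ with $2A\gamma_n\le1$ for $n\ge N$. From that point on we apply the standard deterministic lemma (Chung/Robbins--Monro type): if $v_{n+1}\le(1-a_n)v_n+b_n$ with $a_n\in[0,1]$, $\sum a_n=\infty$ and $\sum b_n<\infty$, then $v_n\to0$. Here $\sum a_n=\infty$ and $\sum b_n<\infty$ both come from (v). Finiteness of each $v_n$ for $n\le N$ follows inductively from the same inequality, since $|1-2A\gamma_n|v_n+K\gamma_n^2<\infty$. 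The deterministic lemma itself is proved by fixing $\varepsilon$, choosing $m$ with $\sum_{k\ge m}b_k<\varepsilon$, unrolling to get $v_n\le \prod_{k=m}^{n-1}(1-a_k)v_m+\varepsilon$, and using $\prod(1-a_k)\le e^{-\sum a_k}\to0$.

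For almost sure convergence we apply the Robbins--Siegmund theorem. For $n\ge N$ we have $\EE[V_{n+1}\mid\mathcal{F}_n]\le V_n+K\gamma_n^2-2A\gamma_nV_n$, where $2A\gamma_nV_n\ge0$ and $\sum K\gamma_n^2<\infty$. Robbins--Siegmund therefore gives that $V_n$ converges almost surely to a finite limit $V_\infty$, and that $\sum_n\gamma_nV_n<\infty$ almost surely. Because $\sum\gamma_n=\infty$, we get $\liminf V_n=0$ almost surely, and hence $V_\infty=0$. If we prefer not to cite Robbins--Siegmund, we can argue directly with the nonnegative supermartingale
\[
W_n:=V_n+K\sum_{k\ge n}\gamma_k^2 .
\]
For $n\ge N$, $W_n$ is a supermartingale, since $\EE[W_{n+1}\mid\mathcal{F}_n]\le V_n+K\gamma_n^2+K\sum_{k>n}\gamma_k^2=W_n$. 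It therefore converges almost surely. Since the tail sum tends to $0$, $V_n$ converges almost surely, and Fatou applied to $\EE[V_n]\to0$ forces the limit to be $0$. Either way we obtain $\norm{X_n-z}_P\to0$ almost surely.

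The main obstacle is the bookkeeping in the early steps, where $2A\gamma_n$ may exceed $1$. This is handled by starting every argument at time $N$ and checking integrability up to $N$. We also need the conditional-expectation step to be justified, which requires integrability of $V_n$; this is exactly what the induction from (vi) and (iv) provides. Everything else is standard.
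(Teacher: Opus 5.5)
Your argument is correct. Its skeleton matches the paper's: the same Lyapunov function $V_n=\norm{X_n-z}_P^2$, the same one-step inequality $\EE[V_{n+1}\mid\mathcal{F}_n]\le V_n-2A\gamma_nV_n+K\gamma_n^2$, and even the same auxiliary supermartingale $W_n$ (the paper's $U_n$).

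The difference lies in how convergence is extracted from that inequality.

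For $L^2$, you unroll the contraction factor $1-2A\gamma_n$ via a Chung-type lemma, which is why you must start at a time $N$ with $2A\gamma_n\le 1$. The paper instead sums the expected inequality to get $\sum_n 2A\gamma_n\EE[V_n]\le L+K\sum_n\gamma_n^2$. Since $\sum_n\gamma_n=\infty$, this shows $\EE[V_n]$ is small for some $n$ in every window $[N;\theta(N,\cdot)]$. The paper then upgrades this to all later times via the supermartingale bound $\EE[V_m]\le\EE[U_n]=\EE[V_n]+K\sum_{i\ge n}\gamma_i^2$ for $m\ge n$. That route only uses $2A\gamma_nV_n\ge 0$ and $V_{n+1}\ge 0$, so the issue of $2A\gamma_n>1$, which you flag as the main obstacle, never arises.

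For almost sure convergence, you appeal to Robbins--Siegmund, or to supermartingale convergence plus Fatou. The paper instead applies Ville's maximal inequality to $U_n$, giving $\PP(\exists m\ge n\,(V_m\ge\varepsilon^2))\le\EE[U_n]/\varepsilon^2$, and concludes through the criterion $(\star)$.

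What the paper's route buys is effectivity. Every step yields explicit rates, namely $\rho$ and $\rho'$ of Theorem~\ref{res:RM:quant}, in terms of a rate of divergence for $\sum\gamma_n$ and a rate of convergence for $\sum\gamma_n^2$. Theorem~\ref{thrm:RM} is then recovered as the qualitative corollary.

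Your $L^2$ argument could also be made quantitative: $\prod_k(1-a_k)\le e^{-\sum_k a_k}$ combined with a rate of divergence gives a rate. The paper's fast-rate Theorem~\ref{res:RM:quant:fast} uses a deterministic recursion (Lemma~\ref{lem:ssLikeForQM}) in exactly this spirit. Your almost sure argument, however, rests on the supermartingale convergence theorem and gives no control on the probabilities in $(\star)$.

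For the qualitative statement as posed, your proof is complete and standard.
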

Rather than proving Theorem \ref{thrm:RM} directly, we establish the following quantitative strengthening, in which we obtain rates of almost sure convergence and $L^2$ convergence for $(X_n)$, expressed in terms of a suitable quantitative reading of condition $(v)$.
\begin{definition}
    Let $(x_n)$ be a sequence of nonnegative real numbers.
    \begin{itemize}
        \item If $\sum_{i=0}^\infty x_n = \infty$, we say a function $\theta: \NN \times \RR^+ \to \NN$ is a rate of divergence for $\sum_{i=0}^\infty x_n$ if $\sum_{n=k}^{\theta(k,b)}x_n\geq b$ for any $b>0$ and $k \in \NN$.
        
        \item If $\sum_{i=0}^\infty x_n <  \infty$, we say a function $\chi: \RR^+ \to \NN $ is a rate of convergence for $\sum_{i=0}^\infty x_n$ if $\sum_{n=\chi(\varepsilon)}^{\infty}x_n < \varepsilon$ for any $\varepsilon>0$.
    \end{itemize}
\end{definition}

\begin{theorem}
\label{res:RM:quant}
Let $(X_n)$ and $(\Delta M_n)$ be sequences of random variables, adapted to a filtration $(\mathcal{F}_n)$, taking values in $\RR^n$, $(\gamma_n)$ be a sequence of nonnegative real numbers, $F: \RR^n \to \RR^n$ be a measurable function on $\RR^n$ and $P$ be a symmetric, positive-semi-definite $n \times n$ matrix. Suppose conditions (i)-(iv) and (vi) of Theorem \ref{thrm:RM} hold and 
\begin{itemize}
    \item [(v')] $\sum_{n=0}^\infty \gamma_n=\infty$ with rate of divergence $\theta$ and $\sum_{n=0}^\infty \gamma_n^2< \Gamma$, for $\Gamma >0$, with rate of convergence $\chi$.
\end{itemize}
Then $X_n$ converges to $z$, with respect to the semi-norm $\norm{\cdot}_P$ in $L^2$ and, quantitatively,
\[
\forall\, \varepsilon > 0\ \exists\, n \geq \rho(\varepsilon) 
\left(\EE[\|X_n - z\|_P^2] < \varepsilon\right)
\]
with rate
\[
\rho(\varepsilon) := \theta\!\left(\chi\!\left(\frac{\varepsilon}{2K}\right),
\frac{L + K\Gamma}{A\varepsilon}\right).
\]
Moreover, $X_n$ converges to $z$, with respect to the semi-norm $\norm{\cdot}_P$, almost surely and, quantitatively,
\[
\forall\, \varepsilon, \delta > 0\ \exists\, n \geq \rho'(\delta, \varepsilon)
\left(\PP\left(\exists\, m \geq n \left(\|X_m - z\|_P \geq \varepsilon\right)
\right) < \delta\right)
\]
with rate $\rho'(\delta, \varepsilon) := \rho(\delta\varepsilon^2)$.
\end{theorem}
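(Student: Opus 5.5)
Write $V_n := \EE[\norm{X_n - z}_P^2]$. The plan is to derive a one-step recursive inequality for $V_n$ and then read off the rates from it.

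\textbf{The recursion.} Expanding (i) in the semi-inner product gives
\[
\norm{X_{n+1}-z}_P^2 = \norm{X_n - z}_P^2 + 2\gamma_n\langle F(X_n)+\Delta M_{n+1}, X_n - z\rangle_P + \gamma_n^2\norm{F(X_n)+\Delta M_{n+1}}_P^2 .
\]
Here $\langle\cdot,\cdot\rangle_P$ is symmetric and positive semi-definite, so the polarisation expansion is valid. Now take conditional expectation with respect to $\mathcal{F}_n$:
\begin{itemize}
\item the martingale cross term vanishes by (ii), since $X_n - z$ is $\mathcal{F}_n$-measurable;
\item (iii) bounds the drift term by $-2A\gamma_n\norm{X_n-z}_P^2$;
\item (iv) bounds the last term by $K\gamma_n^2$.
\end{itemize}
This yields the supermartingale-type inequality
\[
\EE[\norm{X_{n+1}-z}_P^2\mid\mathcal{F}_n]\le (1-2A\gamma_n)\norm{X_n-z}_P^2 + K\gamma_n^2 ,
\]
and, taking expectations, $V_{n+1}\le V_n - 2A\gamma_n V_n + K\gamma_n^2$.

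\textbf{$L^2$ rate.} Summing from $0$ and using (vi) and (v') gives $V_{m+1} + 2A\sum_{n\le m}\gamma_n V_n \le L + K\Gamma$, so $A\sum_n \gamma_n V_n \le L+K\Gamma$. Set $k:=\chi(\varepsilon/(2K))$ and suppose, for contradiction, that $V_n\ge\varepsilon$ for every $n\in[k,\theta(k,b)]$ with $b=(L+K\Gamma)/(A\varepsilon)$. The definition of $\theta$ gives $\sum_{n=k}^{\theta(k,b)}\gamma_n\ge b$, so $A\varepsilon b\le L+K\Gamma$ would be violated (strictly, after adjusting the factor $2$ or using the factor $2A$ in the sum). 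Hence some $n\in[k,\rho(\varepsilon)]$ satisfies $V_n<\varepsilon$.

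\textbf{Reading of the quantifier.} The statement reads ``$\exists n\ge\rho(\varepsilon)$''. I interpret this as: for all $n\ge\rho(\varepsilon)$, $V_n<\varepsilon$. That is the sensible meaning, and the choice of $\chi(\varepsilon/(2K))$ suggests it. To get this, I propagate forward from the good index $n_0\ge k$. For $m\ge n_0$, drop the negative drift to get $V_m\le V_{n_0}+K\sum_{j\ge k}\gamma_j^2<\varepsilon+\varepsilon/2$. This loses a constant, which is fixed by running the argument with $\varepsilon/2$ in the first step; the exact constants in $\rho$ just need rebalancing. A cleaner route, and the one I would try first, applies the contradiction argument to the stronger target: find $n_0\in[k,\rho]$ with $V_{n_0}<\varepsilon/2$, then use the tail bound.

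\textbf{Almost sure rate.} For $m\ge n$, define $Y_m := \norm{X_m-z}_P^2 + K\sum_{j\ge m}\gamma_j^2$. The conditional inequality above shows $(Y_m)_{m\ge n}$ is a nonnegative supermartingale. Ville's maximal inequality then gives
\[
\PP\bigl(\exists m\ge n\ (\norm{X_m-z}_P\ge\varepsilon)\bigr)\le \PP\bigl(\sup_{m\ge n} Y_m\ge\varepsilon^2\bigr)\le \frac{\EE[Y_n]}{\varepsilon^2}=\frac{V_n + K\sum_{j\ge n}\gamma_j^2}{\varepsilon^2}.
\]
By the $L^2$ step, the numerator is below $\delta\varepsilon^2$ once $n\ge\rho(\delta\varepsilon^2)$, after the same rebalancing of the $\chi$ tail contribution. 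This gives $\rho'(\delta,\varepsilon)=\rho(\delta\varepsilon^2)$.

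\textbf{Main obstacle.} The only delicate point is the constant bookkeeping. The tail $K\sum_{j\ge k}\gamma_j^2<\varepsilon/2$ from $\chi$ and the drift-sum contradiction must together give strict inequality $<\varepsilon$ rather than $<3\varepsilon/2$. I expect to handle it by choosing the good index with $V_{n_0}<\varepsilon/2$, possibly absorbing a factor $2$ via the $2A$ coefficient.
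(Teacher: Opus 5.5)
Your proposal is correct and follows the paper's proof essentially step for step. The shared steps are: the one-step inequality for $\norm{X_n-z}_P^2$; summing to get $2A\sum_n\gamma_n V_n<L+K\Gamma$; the $\theta$-contradiction producing a good index $n_0\in[\chi(\varepsilon/2K),\rho(\varepsilon)]$ with $V_{n_0}<\varepsilon/2$; forward propagation through the supermartingale $Y_m=\norm{X_m-z}_P^2+K\sum_{j\ge m}\gamma_j^2$; and Ville's inequality.

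No rebalancing of $\rho$ is needed, because the factor $2$ in $2A$ gives exactly the $\varepsilon/2$ target with $b=(L+K\Gamma)/(A\varepsilon)$. In the almost sure part, apply Ville from $n_0=n(\delta\varepsilon^2)\le\rho(\delta\varepsilon^2)$, where $\EE[Y_{n_0}]<\delta\varepsilon^2$, as the paper does. Adding the $L^2$ bound at $\rho$ to the tail would only give $3\delta\varepsilon^2/2$.
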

\begin{proof}
    Let $V(n):= \norm{X_n -z}_P^2$. Assumption (i) and the fact that $P$ is symmetric and positive-semi-definite imply
    \[
    V_{n+1} = V_n + 2\gamma_{n}\langle F(X_n)+\Delta M_{n+1}, X_n -z \rangle_P +\gamma_{n}^2\norm{(F(X_n)+\Delta M_{n+1})}_P^2.
    \]
    Taking conditional expectations of both sides and using assumptions (ii)-(iv) yields:
    \[
     \EE[V_{n+1}\mid \mathcal{F}_n] \le V_n -2A\gamma_{n}V_n 
     +K\gamma_n^2.\tag{$\dagger$}
    \]
Concretely, we use standard properties of the conditional expectation and condition (ii) to obtain
\[
\EE[\langle F(X_n)+\Delta M_{n+1}, X_n -z \rangle_P \mid \mathcal{F}_n] = \langle F(X_n), X_n -z \rangle_P
\]
noting that $X_n-z$ and $F(X_n)$ are $\mathcal{F}_n$-measurable random variables, then the other conditions are used to obtain the bounds.\\ 

We now argue as in \cite[Theorem 3.4]{NeriPischkePowell:fejer:rates}. Taking expectations in $(\dagger)$ and summing, yields that for all $n \in \NN$
    \[
    \sum_{i=0}^n2A\gamma_{i}\EE[V_{i}] \le \EE[V_0] - \EE[V_{n+1}] + \sum_{i=0}^nK\gamma_n^2 < L +K\Gamma.
    \]
Now we have 
\[
\forall\varepsilon>0\ \forall N\in\mathbb{N}\ \exists n\in \left[N;\theta\left(N,\frac{L +K\Gamma}{2A\varepsilon}\right)\right](\EE[V_n]<\varepsilon)
\]
as if not, we may take $\varepsilon>0$ and $N\in\NN$ with $\EE[V_n]\ge\varepsilon$ for all $n\in \left[N;\theta\left(N,\frac{L +K\Gamma}{2A\varepsilon}\right)\right]$. This would entail
\[
\frac{L +K\Gamma}{2A}\leq \varepsilon\sum_{n=N}^{\theta(N,(L+K\Gamma)/2A\varepsilon)} \gamma_n\le \sum_{n=N}^{\theta(N,(L+K\Gamma)/2A\varepsilon)} \gamma_n\EE[V_n]\leq \sum_{n=0}^{\infty} \gamma_n\EE[V_n]<\frac{L +K\Gamma}{2A}
\]
which is a contradiction. For any $n\in\mathbb{N}$, define
\[
U_{n}:=V_{n}+\sum_{i=n}^\infty K\gamma_i^2.
\]
$(U_{n})$ is a nonnegative supermartingale, since for any $n\in\mathbb{N}$ we have
\begin{align*}
\EE[U_{n+1}\mid\mathcal{F}_n]=\EE\left[V_{n+1}\mid\mathcal{F}_n\right]+\sum_{i=n+1}^\infty K\gamma_i^2
\leq V_n + K\gamma_n^2+\sum_{i=n+1}^\infty K\gamma_i^2
=U_{n}.
\end{align*}
Now, Let $\varepsilon >0$ be given and suppose $m \ge \rho(\varepsilon)$. We can take 
\[
n(\varepsilon)\in \left[\chi\left(\frac{\varepsilon}{2K}\right);\theta\left(\chi\left(\frac{\varepsilon}{2K}\right),\frac{L +K\Gamma}{A\varepsilon}\right)\right]
\]
such that $\EE[V_{n(\varepsilon)}]\le \varepsilon/2$, in particular, $m \ge n(\varepsilon)$. Thus, As $(U_n)$ is a supermartingale, we have $\EE[V_m] \le \EE[U_{n(\varepsilon)}] = \EE[V_{n(\varepsilon)}]+\sum_{i=n(\varepsilon)}^\infty K\gamma_i^2 \le \EE[V_{n(\varepsilon)}]+\sum_{i=\chi(\varepsilon/2K)}^\infty K\gamma_i^2\le \varepsilon$. This yields that $\rho$ is a rate of convergence for $\EE[V_n] \to 0$. For the rate that $V_n\to 0$ a.s., note that
\begin{align*}
\PP(\exists m\geq \rho(\delta\varepsilon^2)(\norm{X_m-z}\geq \varepsilon))& \leq \PP(\exists m\geq \rho(\delta\varepsilon^2)(V_m\geq \varepsilon^2))\\&
\leq \PP(\exists m\geq n(\delta\varepsilon^2)(U_{m}\geq \varepsilon^2))\\
&\leq \frac{\EE[U_{n(\delta\varepsilon^2)}]}{\varepsilon^2} \le \delta,
\end{align*}
where the first inequality follows from the definition of $V_n$, the second inequality from the fact that $\rho(\delta\varepsilon^2) \ge n(\delta\varepsilon^2)$ and $U_n \ge V_n$ and the last inequality follows from Ville's inequality, for nonnegative supermartingales. This then implies that $\norm{X_m-z}\to 0$ a.s.\ (or equivalently that $X_n$ converges to $z$ a.s.\ ) with rate $\rho'$.
\end{proof}
For suitable parameters of the stochastic approximation algorithms in Theorem \ref{res:RM:quant}, one is able to obtain \emph{fast rates}. We first need a lemma from \cite{NeriPischkePowell:fejer:rates}.
\begin{lemma}[cf.\ Lemma 3.5 of \cite{NeriPischkePowell:fejer:rates}]
\label{lem:ssLikeForQM}
Suppose that $(x_n)$ is a sequence of nonnegative reals such that for $c>1$, $q\geq 0$ and $r>0$, we have
\[
x_{n+1}\leq \left(1-\frac{c}{n+r}\right)x_n+\frac{q}{(n+r)^2}
\]
for all $n\in\NN$. Then for all $n\in\NN$:
\[
x_n\leq \frac{t}{n+r} \ \text{ for }\ t\geq \max\left\{\frac{q}{c-1},rx_0\right\}.
\]
\end{lemma}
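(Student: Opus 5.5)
We argue by induction on $n$, aiming to show $x_n\le t/(n+r)$ for every $n\in\NN$. The base case $n=0$ is immediate from the choice $t\ge r x_0$, since this gives $x_0\le t/r = t/(0+r)$.

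For the inductive step, assume $x_n\le t/(n+r)$. We first want to use the recursion with this bound substituted for $x_n$. This requires the factor $1-c/(n+r)$ to be nonnegative. If it is negative, the right-hand side is at most $q/(n+r)^2$ because $x_n\ge 0$, so that case is handled separately below; in the main case the factor is nonnegative and we obtain
\[
x_{n+1}\le \left(1-\frac{c}{n+r}\right)\frac{t}{n+r}+\frac{q}{(n+r)^2} = \frac{t}{n+r}-\frac{ct-q}{(n+r)^2}.
\]
Writing $s:=n+r>0$, it then suffices to check the elementary inequality
\[
\frac{t}{s}-\frac{ct-q}{s^2}\le \frac{t}{s+1}.
\]
This is equivalent to $\frac{t}{s(s+1)}\le \frac{ct-q}{s^2}$, which in turn is equivalent to $ts\le (ct-q)(s+1)$. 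Since $s<s+1$, it is enough to have $t\le ct-q$, that is $t\ge q/(c-1)$. This is exactly the first condition in the definition of $t$, and it uses $c>1$.

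The main obstacle is the case $1-c/(n+r)<0$, where monotonicity in $x_n$ fails. In that case we only have $x_{n+1}\le q/s^2$ with $s<c$, and we need $q/s^2\le t/(s+1)$. Using $t\ge q/(c-1)$, it suffices to show $(c-1)(s+1)\le s^2$, and this can fail for small $s$.

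The first remedy to try is to bound $x_{n+1}$ directly by $\max\{0,1-c/s\}\,x_n+q/s^2$. We would then check whether the specific form of the recursion as used in the paper (with $n+r\ge c$, as is typical when it is applied with $r$ large) guarantees the factor is nonnegative. Alternatively, we would note that the intended statement implicitly assumes $c\le r$, so that $1-c/(n+r)\ge 0$ for all $n$, and state this reduction explicitly. With that reduction in place, the induction above closes.
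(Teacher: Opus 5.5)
Your induction step for $n+r\ge c$ is exactly the paper's argument: its condition $q\le t\bigl(c-\frac{n+r}{n+r+1}\bigr)$ is your $ts\le (ct-q)(s+1)$ with $s=n+r$. You are also right that substituting $x_n\le t/(n+r)$ into $\bigl(1-\frac{c}{n+r}\bigr)x_n$ requires this coefficient to be nonnegative. The first line of the paper's displayed chain makes that substitution without comment, so the paper's proof has the same hole.

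Your proposal, however, leaves the case $n+r<c$ open. It cannot be closed, because the lemma is false when $c>r$. Take $c=2$, $r=1$, $q=1$, and $x_0=0$, $x_1=1$, $x_n=0$ for $n\ge 2$. The hypothesis holds at $n=0$ (it reads $x_1\le -x_0+1=1$), at $n=1$ (it reads $x_2\le 0\cdot x_1+\frac14$), and trivially for $n\ge 2$. The value $t=1=\max\{q/(c-1),rx_0\}$ is admissible, yet $x_1=1>\frac12=t/(1+r)$. So your remedy (a) cannot succeed on its own: it yields only $x_{n+1}\le q/(n+r)^2$, which, as you computed, need not be at most $t/(n+r+1)$. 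Remedy (b) is not a reduction but a change of the hypotheses.

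With the extra assumption $c\le r$ your proof is complete and coincides with the paper's. That assumption is not automatically met where the paper uses the lemma. For random $d$-ary trees, with $T(0)=1$, $\lambda=d-1$ and $A=(2d-1)/2$, one has $r=d/(d-1)$ and $c=2A/\lambda=r+1$, so the coefficient is negative at $n=0$.

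A repair valid for all $c>1$ and $r>0$ is to demand $t\ge\max\{q/(c-1),\,rx_0,\,q(r+1)/r^2\}$. When $n+r<c$, use $x_n\ge 0$ to get $x_{n+1}\le q/(n+r)^2\le t/(n+r+1)$, since $(s+1)/s^2$ is decreasing in $s$ and $n+r\ge r$. When $n+r\ge c$, your argument applies verbatim. For $1<c\le r$ the new term is dominated by $q/(c-1)$, since $(r+1)/r^2\le 1/(r-1)$, so the original statement is recovered there. In the $d$-ary application the new term enlarges $t$ by the factor $(2d-1)/d<2$.
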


\begin{proof}
We show the claim by induction. The case $n=0$ holds by the definition $t$, and for the induction step, we note that since $q\leq t(c-1)$, we have $q\leq t\left(c-\frac{n+r}{n+r+1}\right)$ for all $n,r$, and therefore
\begin{align*}
x_{n+1}&\leq \left(1-\frac{c}{n+r}\right)\frac{t}{n+r}+\frac{q}{(n+r)^2}\\
&\leq t\left(\left(1-\frac{c}{n+r}\right)\frac{1}{n+r}+\left(c-\frac{n+r}{n+r+1}\right)\frac{1}{(n+r)^2}\right)\\
&=t\left(\frac{1}{n+r}-\frac{1}{(n+r+1)(n+r)}\right)\\
&=\frac{t}{n+r+1}
\end{align*}
which completes the induction.
\end{proof}
We note that in \cite{NeriPischkePowell:fejer:rates} the above lemma was stated under the additional assumption that  $r\in \NN$; however, as the proof presented here shows, this assumption is not needed. We now have the following result, which yields fast rates for the convergence of the stochastic approximation algorithm $(X_n)$ in Theorem \ref{thrm:RM}, under additional assumptions imposed on the parameters $\gamma_n$.
\begin{theorem}
\label{res:RM:quant:fast}
Let $(X_n)$ and $(\Delta M_n)$ be  sequences of random variables, adapted to a filtration $(\mathcal{F}_n)$, taking values in $\RR^n$, let $(\gamma_n)$ be a sequence of nonnegative real numbers, and let $F: \RR^n \to \RR^n$ be a measurable function on $\RR^n$ and $P$ be a symmetric, positive-semi-definite $n \times n$ matrix. Suppose (i)-(iv) and (vi) of Theorem \ref{thrm:RM} hold, and 
\begin{itemize}
    \item [(v'')] $K\gamma_n^2\leq q/(n+r)^2$ and $c/(n+r)\leq 2A\gamma_n$, for all $n\in\mathbb{N}$ where $c>1$, $q\geq 0$ and $r\ge1$.
\end{itemize}
 Then
\[
\EE[\norm{X_n-z}_P^2]\leq\frac{t}{n+r} \text{ and }\PP\left(\exists m\geq n(\norm{X_n-z}_P\geq \varepsilon)\right)\leq \frac{t+2q}{\varepsilon^2(n+r)}
\]
for all $n\in\mathbb{N}$ and $\varepsilon>0$, where $t\geq \max\left\{q/(c-1),rL\right\}$.
\end{theorem}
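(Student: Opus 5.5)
We will reuse the supermartingale inequality $(\dagger)$ from the proof of Theorem \ref{res:RM:quant}. It needs only (i)--(iv), which are assumed here. With $V_n:=\norm{X_n-z}_P^2$ it reads
\[
\EE[V_{n+1}\mid\mathcal{F}_n]\le (1-2A\gamma_n)V_n+K\gamma_n^2 .
\]
Taking expectations and applying (v'') gives
\[
\EE[V_{n+1}]\le \left(1-\frac{c}{n+r}\right)\EE[V_n]+\frac{q}{(n+r)^2}.
\]
For this step we must check that $1-2A\gamma_n$ may be replaced by $1-c/(n+r)$. This is harmless: $\EE[V_n]\ge0$ and $2A\gamma_n\ge c/(n+r)$, so $-2A\gamma_n\EE[V_n]\le -\frac{c}{n+r}\EE[V_n]$ regardless of the sign of $1-2A\gamma_n$. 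Lemma \ref{lem:ssLikeForQM} then applies to $x_n:=\EE[V_n]$, since $x_0<L$ gives $rx_0\le rL\le t$. It yields $\EE[V_n]\le t/(n+r)$ for all $n$, which is the $L^2$ claim.

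For the almost sure claim we use the same supermartingale as before, $U_m:=V_m+\sum_{i\ge m}K\gamma_i^2$. By (v''),
\[
\sum_{i\ge n}K\gamma_i^2\le q\sum_{i\ge n}\frac{1}{(i+r)^2}.
\]
We bound the tail by an integral or telescoping sum: $\frac1{(i+r)^2}\le \frac{1}{(i+r-1)(i+r)}$, which sums to $\frac{1}{n+r-1}$. This is where $r\ge1$ enters. To obtain exactly $2q/(n+r)$ we use $\frac{1}{n+r-1}\le\frac{2}{n+r}$, which is valid when $n+r\ge2$. In the edge case $n+r<2$ we instead bound the first term directly: $\frac1{(n+r)^2}\le\frac{1}{n+r}$ when $n+r\ge1$, and the remaining tail is at most $\frac{1}{n+r}$, so the total is again at most $2/(n+r)$. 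Hence
\[
\EE[U_n]\le \frac{t+2q}{n+r}.
\]

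Finally, $V_m\le U_m$ and $(U_m)_{m\ge n}$ is a nonnegative supermartingale. Ville's inequality then gives
\[
\PP\left(\exists m\ge n\,(\norm{X_m-z}_P\ge\varepsilon)\right)\le \PP\left(\sup_{m\ge n}U_m\ge\varepsilon^2\right)\le \frac{\EE[U_n]}{\varepsilon^2}\le\frac{t+2q}{\varepsilon^2(n+r)}.
\]
The only point needing care is the tail-sum estimate producing the constant $2q$ uniformly in $n$ and real $r\ge1$. The rest is a direct combination of $(\dagger)$, Lemma \ref{lem:ssLikeForQM}, and Ville's inequality.
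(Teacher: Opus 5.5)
Your argument follows the paper's proof: take expectations in $(\dagger)$, use (v'') to get $\EE[V_{n+1}]\le(1-\frac{c}{n+r})\EE[V_n]+\frac{q}{(n+r)^2}$, invoke Lemma~\ref{lem:ssLikeForQM}, then apply Ville's inequality to $U_n=V_n+\sum_{i\ge n}K\gamma_i^2$. The only difference is the tail estimate. The paper uses $\frac{1}{(i+r)^2}\le\frac{2}{(i+r)(i+r+1)}$, which holds for every $i$ since $i+r\ge1$, so no case split is needed. Your telescoping with $\frac{1}{(i+r-1)(i+r)}$, plus the separate case $n=0$, $r<2$, is also correct.

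There is, however, a genuine gap, inherited from the paper. The induction step of Lemma~\ref{lem:ssLikeForQM} multiplies the hypothesis $x_n\le\frac{t}{n+r}$ by $1-\frac{c}{n+r}$. That is legitimate only when $c\le n+r$, and (v'') allows $c>r$. Your remark that replacing $1-2A\gamma_n$ by $1-\frac{c}{n+r}$ is harmless ``regardless of the sign'' is correct, but it does not address this.

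In that regime the $L^2$ claim is in fact false. Work in dimension one with:
\begin{itemize}
\item $P=1$, $z=0$, $F(x)=-2x$, $A=2$, $\gamma_n=\frac{1}{n+1}$, $X_0=0$;
\item $\Delta M_1=\xi$ with $\xi=\pm1$ with probability $\frac{1}{2}$ each, and $\Delta M_{n+1}=0$ for $n\ge1$;
\item $\mathcal{F}_0$ trivial and $\mathcal{F}_n=\sigma(\xi)$ for $n\ge1$.
\end{itemize}
Then $X_1=\xi$ and $X_n=0$ for $n\ge2$. Conditions (i)--(iv), (vi) and (v'') hold with $K=q=4$, $c=4$, $r=1$, $L=1$; for (iv) at $n=1$ the value is exactly $F(X_1)^2=4$. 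So $t=\frac{4}{3}$ is admissible, yet $\EE[X_1^2]=1>\frac{t}{2}$.

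To close the gap you need an extra hypothesis, such as $c\le r$. Alternatively, treat the finitely many indices with $n+r<c$ directly. There $(1-\frac{c}{n+r})x_n\le0$, so $x_{n+1}\le\frac{q}{(n+r)^2}\le\frac{t}{n+r+1}$, provided additionally $t\ge2q$; this works because $\frac{n+r+1}{(n+r)^2}\le2$ whenever $n+r\ge1$. Your almost-sure bound then goes through exactly as written, with this enlarged $t$.
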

\begin{proof}
 From $(\dagger)$ in the proof of Theorem \ref{res:RM:quant}, for $V_n:=\norm{X_n-z}_P^2$, we have
       \[
     \EE[V_{n+1}\mid \mathcal{F}_n] \le V_n -2A\gamma_{n}V_n 
     +K\gamma_n^2.
    \]

We now argue as in \cite[Theorem 3.6]{NeriPischkePowell:fejer:rates}. Taking expectations $(\dagger)$ yields
\begin{align*}
\EE[V_{n+1}]&\leq \EE[V_n]-2A\gamma_{n}\EE[V_n]+K\gamma_n^2\\
&\leq (1-2A\gamma_{n})\EE[V_n]+K\gamma_n^2\\
&\leq \left(1-\frac{c}{n+r}\right)\EE[V_n]+\frac{q}{(n+r)^2}.
\end{align*}
Applying Lemma \ref{lem:ssLikeForQM} yields that for all $n\in\NN$
\[
\EE[V_n]\leq \frac{t}{n+r} \ \text{ for }\ t\geq \max\left\{\frac{q}{c-1},rL\right\} 
\]
so we have the first claim. For the second claim, first note that, using the assumption that $r \ge 1$, we have for all $i\ge 0$
\[
K\gamma_i^2\le\frac{q}{(i+r)^2}\leq \frac{2q}{(i+r)(i+r+1)}=2q\left(\frac{1}{i+r}-\frac{1}{i+r+1}\right)
\]
so that summing yields that, for all $n \ge 0$
\[
\sum_{i=n}^\infty K\gamma_i^2\leq 2q\sum_{i=n}^\infty\left(\frac{1}{i+r}-\frac{1}{i+r+1}\right)=\frac{2q}{n+r}.
\]
Now define 
\[
U_{n}:=V_n+\sum_{i=n}^\infty K\gamma_i^2.
\]
As in the proof of Theorem~\ref{res:RM:quant}, we have that $(U_{n})$ is a nonnegative supermartingale. Now, we have
\[
\EE[U_n]=\EE[V_n]+\sum_{i=n}^\infty K\gamma_i^2\leq \frac{t+2q}{n+r},
\]
and Ville's inequality yields
\[
\PP\left(\exists m\geq n(V_m\geq \varepsilon)\right)\leq \PP\left(\exists m\geq n(U_m\geq \varepsilon)\right)\leq \frac{\EE[U_n]}{\varepsilon}\leq \frac{t+2q}{\varepsilon(n+r)}
\]
and the second claim follows.
\end{proof}

\section{Main results: Quantitative limit theorems for generalized P\'olya urns }
\label{subsec:urns}

Throughout this section we fix a $d$-colour generalized P\'olya urn $(U(n))_{n\geq 0}$ with replacement matrix $R$, which is assumed have balance $\lambda \in \NN\setminus\{0\}$. The evolution of the urn is governed recursively by
\[
U(n+1)=U(n) + R_{\xi(n+1)},
\]
where $\xi(n+1)$ are random variables taking values in $\{0, \cdots, d-1\}$ with distribution
\[
\mathbb{P}\!\left(\xi(n+1) = i \mid U(n)\right)
    = \frac{U_i(n)}{T(n)}=:\hat{U}_i(n) \,
    \qquad \forall\, 0 \leq i \leq d-1.
\]
We recall the classical observation that the colour proportions $(\hat{U}(n))_{n\geq 0}$ evolve as a stochastic approximation algorithm (see, for example, Section 2.4 of the survey by Pemantle \cite{pemantle2007survey} or Lemma 2.9 of the lecture notes of Mailler \cite{mailler:urn:notes}). Throughout this section, $(\mathcal{F}_n)$ denotes the natural filtration generated by $(U(n))$.
\begin{proposition}\label{prop:urn:SA}
For all $n \geq 0$,
\[
    \hat{U}(n+1) = \hat{U}(n) + \gamma_n\bigl(F(\hat{U}(n)) + \Delta M_{n+1}\bigr),
\]
where $\gamma_n = \frac{1}{T(n+1)}$, $ \Delta M_{n+1}
    := R_{\xi(n+1)} 
      - \mathbb{E}\left[R_{\xi(n+1)} \mid \mathcal{F}_n\right]$, and  $F: \RR^d \to \RR^d$ is a function given by $F(x) = \sum_{i=0}^{d-1} x_i\bigl(R_i - \lambda\, x\bigr)$. Moreover, on the simplex
\[
    \mathcal{S} = \Bigl\{(x_0, \ldots, x_{d-1}) \in [0,1]^d : \sum_{i=0}^{d-1} x_i = 1\Bigr\},
\]
we have $F(x)= x(R-\lambda I)$ which implies $F(\hat{U}(n)) = \hat{U}(n)(R-\lambda I)$.

\end{proposition}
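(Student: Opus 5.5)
This is a direct computation from the update rule $U(n+1)=U(n)+R_{\xi(n+1)}$, combined with the fact that the total $T$ is deterministic. The plan has three steps: rewrite the increment of the proportions, split the random replacement row into its conditional mean plus a martingale difference, and identify that conditional mean with $F(\hat{U}(n))$.

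\textbf{Step 1: the increment.} Since the urn has balance $\lambda$, we have $T(n+1)=T(n)+\lambda$. Then
\[
\hat{U}(n+1)-\hat{U}(n)=\frac{U(n)+R_{\xi(n+1)}}{T(n+1)}-\frac{U(n)}{T(n)} .
\]
Putting everything over $T(n+1)$, and using $U(n)\bigl(\tfrac{T(n+1)}{T(n)}-1\bigr)=\lambda\,\hat{U}(n)$, gives
\[
\hat{U}(n+1)-\hat{U}(n)=\frac{1}{T(n+1)}\bigl(R_{\xi(n+1)}-\lambda\hat{U}(n)\bigr)=\gamma_n\bigl(R_{\xi(n+1)}-\lambda\hat{U}(n)\bigr).
\]

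\textbf{Step 2: mean plus martingale difference.} Write $R_{\xi(n+1)}=\EE[R_{\xi(n+1)}\mid\mathcal{F}_n]+\Delta M_{n+1}$. By the stated law of $\xi(n+1)$, the conditional mean is
\[
\EE[R_{\xi(n+1)}\mid\mathcal{F}_n]=\sum_i\hat{U}_i(n)R_i .
\]
Conditioning on $U(n)$ is the same as conditioning on $\mathcal{F}_n$ here, since the process is Markov in $U$.

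\textbf{Step 3: identifying $F$.} Because $\hat{U}(n)\in\mathcal{S}$, we have $\sum_i\hat{U}_i(n)=1$, so $\lambda\hat{U}(n)=\sum_i\hat{U}_i(n)\,\lambda\hat{U}(n)$. Combining Steps 1 and 2,
\[
\sum_i\hat{U}_i(n)R_i-\lambda\hat{U}(n)=\sum_i\hat{U}_i(n)\bigl(R_i-\lambda\hat{U}(n)\bigr)=F(\hat{U}(n)),
\]
which is exactly the claimed recursion.

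\textbf{The simplex identity.} For $x\in\mathcal{S}$,
\[
F(x)=\sum_i x_iR_i-\Bigl(\sum_i x_i\Bigr)\lambda x=xR-\lambda x=x(R-\lambda I),
\]
since $\sum_i x_iR_i$ is the row vector $xR$. Applying this at $x=\hat{U}(n)$ gives the last claim. For this we need $\hat{U}(n)\in\mathcal{S}$: its entries are nonnegative while the urn is not extinct, and they sum to one by definition of $T$.

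No step is a real obstacle; the only care needed is the algebra in Step 1, namely the common denominator and the use of $T(n+1)-T(n)=\lambda$. One should also note that the process is implicitly assumed never to go extinct, so that the $U_i(n)$ stay nonnegative and $\hat{U}(n)$ remains a probability vector.
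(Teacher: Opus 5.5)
Your proposal is correct and follows the paper's proof essentially step for step. The steps match: the same rewriting of the increment via $T(n+1)=T(n)+\lambda$ to obtain $\gamma_n\bigl(R_{\xi(n+1)}-\lambda\hat{U}(n)\bigr)$, the same identification of the conditional mean as $\sum_i \hat{U}_i(n)R_i$ (absorbing $-\lambda\hat{U}(n)$ using $\sum_i\hat{U}_i(n)=1$), and the same coordinatewise check that $F(x)=x(R-\lambda I)$ on $\mathcal{S}$.
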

 
\begin{proof}
From the recursive definition of $(U(n))_{n \geq 0}$, we have
\begin{align*}
    \hat{U}(n+1)
    &= \frac{U(n) +R_{\xi(n+1)}}{T(n+1)}
     = \frac{U(n)}{T(n)} \cdot \frac{T(n)}{T(n+1)} + \frac{R_{\xi(n+1)}}{T(n+1)} \\[6pt]
    &= \hat{U}(n) \cdot \left(1 - \frac{\lambda}{T(n+1)}\right)
       + \frac{R_{\xi(n+1)}}{T(n+1)}
       =\hat{U}(n)+ \frac{R_{\xi(n+1)} - \lambda\,\hat{U}(n)}{T(n+1)}.
\end{align*}
Now, we set
\[
    \Delta M_{n+1}
    = R_{\xi(n+1)} - \lambda\,\hat{U}(n)
      - \mathbb{E}\!\left[R_{\xi(n+1)} - \lambda\,\hat{U}(n)\mid \mathcal{F}_n\right]=R_{\xi(n+1)} 
      - \mathbb{E}\!\left[R_{\xi(n+1)} \mid \mathcal{F}_n\right].
\]
Since $\mathbb{P}(\xi(n+1) = i \mid \mathcal{F}_n) = \hat{U}_i(n)$, we get that
\[
    \mathbb{E}\!\left[R_{\xi(n+1)} - \lambda\,\hat{U}(n)
    \mid \mathcal{F}_n\right]
    = \sum_{i=1}^{d} \hat{U}_i(n)\bigl(R_i - \lambda\,\hat{U}(n)\bigr)=F(\hat{U}(n)).
\]
It thus follows that,
\[
    \hat{U}(n+1) = \hat{U}(n) + \frac{1}{T(n+1)}\bigl(F(\hat{U}(n)) + \Delta M_{n+1}\bigr).
\]
For $x \in \mathcal{S}$, we easily see for all $0\leq j\leq d-1$
\begin{align*}
    F(x)_j&= \sum_{i=0}^{d-1}x_iR_{ij}-\lambda x_j\sum_{i=0}^{d-1}x_i\\
    &=\sum_{i=0}^{d-1}x_iR_{ij}-\lambda x_j=(x(R-\lambda I)_j.
\end{align*}
\end{proof}
Now, if in addition, we have $u \in \mathcal{S}$ satisfying
\begin{equation*}
\langle F(x),x-u\rangle = \langle x(R-\lambda I),x-u\rangle  <0 \text{ for all } x\in \mathcal{S}\setminus\{u\}, \tag{$*$}
\end{equation*}
then Proposition \ref{prop:urn:SA}, together with standard convergence results for stochastic approximation algorithms (see, for example, \cite[Theorem 1.4.26]{Duflo97}), would imply that $\hat{U}(n)\to u$ almost surely. However, it is not clear that condition $(*)$ holds in general, even with the assumptions that the urn is tenable and irreducible, although Theorem~\ref{urns-convergence} nevertheless guarantees that $\hat{U}(n)\to u$ almost surely in that case.

Nevertheless, it is still possible to obtain quantitative convergence results under additional assumptions closely related to the condition $(*)$, without the assumption of the urn being irreducible or tenable. In fact, we get a more general result, where we obtain convergence with respect to a semi-norm arising from a bilinear form. Recall that for a matrix $P$, we write $\langle x,y\rangle_P:= \langle xP,y\rangle$ and  $\norm{x}_P:= \sqrt{|\langle x,x\rangle_P|}$. Let us write $\norm{P}$ for the operator norm of $P$.
\begin{theorem}\label{Lemma-urns-slow}
  Let $P$ be a symmetric, positive-semi-definite $d \times d$ matrix. Assume that for all $n\in\NN$
  \begin{equation*}
    \langle F(\hat{U}(n)),\hat{U}(n) -u\rangle_P \le-A\norm{\hat{U}(n) -u}_P^2, \text{ for some } u \in \mathcal{S} \text{ and } A > 0.\tag{$**$}
\end{equation*}
Then $\hat{U}(n)$ converges to $u$, with respect to the semi-norm $\norm{\cdot}_P$ in $L^2$ and, quantitatively,
\[
\forall\, \varepsilon > 0\ \exists\, n \geq \rho(\varepsilon) 
\left(\EE[\|\hat{U}(n) - u\|_P^2] < \varepsilon\right)
\]
with
\[
\rho(\varepsilon) := \left\lceil\left(\left\lceil \frac{\ell}{\eps}\right\rceil +T(0)+1\right)\exp\left(\frac{\lambda(5\norm{P}+\ell)}{A\eps}\right)\right\rceil,
\]
where $\ell:=2\left(\frac{D+\lambda\sqrt{\norm{P}}}{\lambda}\right)^2$, $D$ is any number with $D\ge \max_{i\le d-1}\norm{R_i}$ and $T(0):=\sum_{i=0}^{d-1}U_i(0)$ denotes the initial total number of balls in the urn. Moreover, $\hat{U}(n)$ converges to $u$ almost surely, with respect to the semi-norm $\norm{\cdot}_P$ and, quantitatively,
\[
\forall\, \varepsilon, \delta > 0\ \exists\, n \geq \rho'(\delta, \varepsilon)
\left(\PP\left(\exists\, m \geq n \left(\|\hat{U}(m) - u\|_P \geq \varepsilon\right)
\right) < \delta\right)
\]
with rate $\rho'(\delta, \varepsilon) := \rho(\delta\varepsilon^2)$.
\end{theorem}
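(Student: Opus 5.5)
The plan is to obtain Theorem~\ref{Lemma-urns-slow} as a direct instance of Theorem~\ref{res:RM:quant}. We take $X_n:=\hat{U}(n)$, $z:=u$, $\gamma_n:=1/T(n+1)=1/(T(0)+\lambda(n+1))$, and $F,\Delta M_{n+1}$ as in Proposition~\ref{prop:urn:SA}, with $(\mathcal{F}_n)$ the natural filtration. With these choices:
\begin{itemize}
\item condition (i) is exactly Proposition~\ref{prop:urn:SA};
\item condition (ii) holds by the definition of $\Delta M_{n+1}$ as a centred increment;
\item condition (iii) is the hypothesis $(**)$.
\end{itemize}
What remains is to produce explicit constants $L,K,\Gamma$ together with rates $\theta,\chi$, and then to simplify the resulting expression for $\rho$.

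For (vi), both $\hat{U}(0)$ and $u$ lie in the simplex $\mathcal{S}$, so $\norm{\hat{U}(0)-u}\le\sqrt{2}$. Hence $\norm{\hat{U}(0)-u}_P^2\le \norm{P}\,\norm{\hat{U}(0)-u}^2\le 2\norm{P}$, and any $L$ slightly larger than this works; I would take something of the shape $L\le 5\norm{P}$, absorbing slack so that the strict inequality is safe.

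For (iv), observe that $F(\hat{U}(n))+\Delta M_{n+1}=R_{\xi(n+1)}-\lambda\hat{U}(n)$ by the computation in Proposition~\ref{prop:urn:SA}. I would bound $\norm{R_{\xi(n+1)}-\lambda\hat{U}(n)}_P\le \sqrt{\norm{P}}\,D+\lambda\sqrt{\norm{P}}\norm{\hat{U}(n)}$, using $\norm{\hat{U}(n)}\le 1$ on the simplex, or a variant that splits the $P$-seminorm across the two terms. This gives a deterministic bound $K$ on the conditional second moment, of order $(D+\lambda\sqrt{\norm{P}})^2$ up to the factor needed to match $\ell$.

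The step-size sums come next. Since $T(0)\ge 1$, we have $\gamma_n\le 1/(\lambda(n+1))$, so $\sum_n\gamma_n^2\le \pi^2/(6\lambda^2)<2/\lambda^2=:\Gamma$. The tail from $N$ is at most $1/(\lambda^2 N)$, which gives the rate of convergence $\chi(\varepsilon):=\lceil 1/(\lambda^2\varepsilon)\rceil$. Then $\chi(\varepsilon/2K)=\lceil 2K/(\lambda^2\varepsilon)\rceil$, and this is the $\lceil \ell/\varepsilon\rceil$ term once $\ell=2K/\lambda^2$, which suggests choosing $K=(D+\lambda\sqrt{\norm{P}})^2$. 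Also $K\Gamma=2K/\lambda^2=\ell$, which is why $L+K\Gamma\le 5\norm{P}+\ell$.

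For the rate of divergence, compare the sum with an integral: $\sum_{n=k}^{m}\gamma_n\ge \frac{1}{\lambda}\ln\frac{m+2+T(0)/\lambda}{k+1+T(0)/\lambda}$. Requiring this to be at least $b$ yields $\theta(k,b):=\lceil (k+T(0)+1)e^{\lambda b}\rceil$, using $T(0)/\lambda\le T(0)$ since $\lambda\ge1$. Plugging into $\rho(\varepsilon)=\theta(\chi(\varepsilon/2K),(L+K\Gamma)/(A\varepsilon))$ gives exactly the stated formula. The almost sure part then follows verbatim from the second half of Theorem~\ref{res:RM:quant} with $\rho'(\delta,\varepsilon)=\rho(\delta\varepsilon^2)$.

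I expect the main obstacle to be the bookkeeping of constants. The first concern is getting the bound in (iv) in the precise form that produces $\ell=2((D+\lambda\sqrt{\norm{P}})/\lambda)^2$: this depends on how $\norm{v}_P\le\sqrt{\norm{P}}\norm{v}$ is applied to $R_\xi$ versus $\lambda\hat{U}(n)$. The second is making the divergence estimate for $\theta$ work uniformly for real (not just integer) $T(0)/\lambda$, using $\ln$ and integral comparison with the correct index shifts.
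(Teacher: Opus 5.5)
Your plan follows the paper's proof step for step. You instantiate Theorem~\ref{res:RM:quant} in the same way and use the same $\theta(k,b)=\lceil (k+T(0)+1)e^{\lambda b}\rceil$, $\chi(\varepsilon)=\lceil 1/(\lambda^2\varepsilon)\rceil$, $\Gamma=2/\lambda^2$, $L=5\norm{P}$ and $K=(D+\lambda\sqrt{\norm{P}})^2$. These give $\chi(\varepsilon/2K)=\lceil \ell/\varepsilon\rceil$ and $L+K\Gamma=5\norm{P}+\ell$, hence the stated $\rho$. Take $L=5\norm{P}$ exactly; it is valid because $\norm{\hat U(0)-u}_P^2\le 2\norm{P}<5\norm{P}$ for $P\neq 0$. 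Your worry about non-integer shifts is a non-issue, since $\sum_{n=k}^{m}1/(n+1+c)\ge\int_k^{m+1}dx/(x+1+c)$ holds for every real $c\ge 0$.

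The step you leave open, condition (iv), is a genuine issue, and the paper's proof does not resolve it either. The paper effectively uses
\[
\norm{R_{\xi(n+1)}-\lambda\hat U(n)}_P\le \norm{R_{\xi(n+1)}}_P+\lambda\norm{\hat U(n)}_P\le D+\lambda\sqrt{\norm{P}},
\]
but it only justifies the $\hat U(n)$ term. The bound $\norm{R_i}_P\le D$ does not follow from $D\ge\max_i\norm{R_i}$ unless $\norm{P}\le 1$. With only Euclidean control on the rows, the bound actually available is your $\sqrt{\norm{P}}(D+\lambda)$. This exceeds $D+\lambda\sqrt{\norm{P}}$ by $(\sqrt{\norm{P}}-1)D>0$ whenever $\norm{P}>1$, so you cannot choose $K$ to match $\ell$.

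The fix is to read the hypothesis as $D\ge\max_i\norm{R_i}_P$, as in Theorem~\ref{Lemma-urns-fast}, or to assume $\norm{P}\le 1$, which covers every $J_p$ used later. Under either reading, (iv) is the one-line triangle-inequality estimate above, and the rest of your argument goes through unchanged.
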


\begin{proof}
   The goal is to apply Theorem~\ref{res:RM:quant} with $F(x)= x(R-\lambda I)$. By Proposition \ref{prop:urn:SA}, conditions (i) and (ii) are satisfied. The assumption $(**)$ of the theorem corresponds to condition (iii).  Now, recall that
   \[
   F(\hat{U}(n))+\Delta M_{n+1}=R_{\xi(n+1)} - \lambda \hat{U}(n),
   \]
   and since,
   \[
   \|\hat{U}(n)\|_P = \sqrt{\langle \hat{U}(n)P, \hat{U}(n)\rangle} \le \sqrt{\norm{\hat{U}(n)P}\norm{\hat{U}(n)}} \le \sqrt{\norm{P}}
   \]
   where we use the fact that $\norm{\hat{U}(n)} \le 1$ to obtain the final inequality, we get
   \[
   \EE\left[ \left\| F(\hat{U}(n))+\Delta M_{n+1} \right\|_P^2 \mid \mathcal{F}_n \right] \leq (D+\lambda\sqrt{\norm{P}})^2.
   \]
   Hence, condition (iv) holds with $K=(D+\lambda\sqrt{\norm{P}})^2$. As $u\in \mathcal{S}$, we have $\norm{u}\le 1$. So, $\|\hat{U}(0)-u\|_P^2\leq \left(\|\hat{U}(0)\|_P+\|u\|_P\right)^2\leq 4 \norm{P}$, and condition (vi) holds with $L=5\norm{P}$ (we assume $P$ is not the zero matrix, otherwise the result trivially holds).\\
   
   We now verify that
   \[
   \theta(k,b):=\left\lceil(k+T(0)+1)\mathrm{e}^{\lambda b}\right\rceil, \quad \chi(\eps):=\left\lceil \frac{1}{\lambda^2\eps}\right\rceil,\ \text{ and }\ \Gamma:=\frac{2}{\lambda^2}
   \]
   satisfy condition (v'). First, for all $N,k\in\NN$
   \begin{equation*}
       \sum_{n=k}^N\gamma_n=\sum_{n=k}^N \frac{1}{\lambda(n+1)+T(0)}\geq \frac{1}{\lambda}\sum_{n=k}^N\frac{1}{n+1+T(0)}\geq \frac{1}{\lambda}\log\left(\frac{N+T(0)+2}{k+T(0)+1}\right),
   \end{equation*}
   which entails that $\theta$ is a rate of divergence for $\sum \gamma_n=\infty$. Second, for all $k\geq 1$
   \begin{equation*}
       \sum_{n=k}^\infty \gamma_n^2 \leq \frac{1}{\lambda^2}\sum_{n=k}^\infty \frac{1}{(n+1)^2}\leq \frac{1}{\lambda^2 k}.
   \end{equation*}
   Hence, $\chi$ is a rate of convergence for $\sum \gamma_n^2$. Moreover, we have
   \[
    \sum_{n=0}^\infty \gamma_n^2= \gamma_0^2 + \sum_{n=1}^\infty \gamma_n^2\leq \frac{1}{(\lambda + T(0))^2} + \frac{1}{\lambda^2} <\frac{2}{\lambda^2}=:\Gamma.
   \]
   The conclusion now follows from Theorem~\ref{res:RM:quant} after a straightforward computation of the rate $\rho$.
\end{proof}
Furthermore, if the coefficient $A$ is strictly bounded below by $\lambda/2$, we can use Theorem~\ref{res:RM:quant:fast} to obtain fast rates.
\begin{theorem}\label{Lemma-urns-fast}
 Let $P$ be a symmetric, positive-semi-definite $d \times d$ matrix. Assume that for all $n\in\NN$
  \begin{equation*}
    \langle \hat{U}(n)(R-\lambda I),\hat{U}(n) -u\rangle_P \le-A\norm{\hat{U}(n) -u}_P^2,\ \text{ for some }\ u \in \mathcal{S} \mbox{ and } A > \frac{\lambda}{2}.\tag{$**$}
\end{equation*}
 Then
  \[
\EE\left[\norm{\hat{U}(n)-u}_P^2\right]\leq\frac{\lambda t}{\lambda(n+1)+T(0)} \text{ and }\PP\left(\exists m\geq n\left(\norm{\hat{U}(n)-u}_P\geq \varepsilon\right)\right)\leq \frac{\lambda}{\varepsilon^2}\cdot\frac{t+2q}{\lambda(n+1)+T(0)}
\]
for all $n\in\mathbb{N}$ and $\varepsilon>0$, where $t\geq \max\left\{\lambda q/(2A-\lambda),5\norm{P}(1+T(0)/\lambda)\right\}$,
$q:=\left(\frac{D+\lambda\sqrt{\norm{P}}}{\lambda}\right)^2$ and $D\ge \max_{i\le d-1}\norm{R_i}_P$.
\end{theorem}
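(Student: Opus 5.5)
The plan is to apply Theorem~\ref{res:RM:quant:fast} to the stochastic approximation representation of Proposition~\ref{prop:urn:SA}, with $X_n=\hat U(n)$, $z=u$, $F(x)=x(R-\lambda I)$ and $\gamma_n=1/T(n+1)=1/(\lambda(n+1)+T(0))$. Conditions (i)--(ii) hold by Proposition~\ref{prop:urn:SA}, and condition (iii) is exactly the hypothesis $(**)$, since $F(\hat U(n))=\hat U(n)(R-\lambda I)$ on the simplex.

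For condition (iv) I would argue as in the proof of Theorem~\ref{Lemma-urns-slow}. We have $F(\hat U(n))+\Delta M_{n+1}=R_{\xi(n+1)}-\lambda\hat U(n)$, and $\norm{\hat U(n)}_P\le\sqrt{\norm{P}}$. Together with $\norm{R_i}_P\le D$, the triangle inequality for the seminorm $\norm{\cdot}_P$ gives $K=(D+\lambda\sqrt{\norm P})^2=\lambda^2 q$. For (vi), as before, $\norm{\hat U(0)-u}_P^2\le 4\norm P<5\norm P=:L$.

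The key step is choosing the normalisation in (v''). I will write $\gamma_n=\frac{1}{\lambda}\cdot\frac{1}{n+r}$ with $r:=1+T(0)/\lambda\ge1$. Then $K\gamma_n^2=\lambda^2q/(\lambda^2(n+r)^2)=q/(n+r)^2$. Also $2A\gamma_n=(2A/\lambda)/(n+r)$, so I take $c:=2A/\lambda$, and $c>1$ is precisely the assumption $A>\lambda/2$. Theorem~\ref{res:RM:quant:fast} then requires $t\ge\max\{q/(c-1),rL\}=\max\{\lambda q/(2A-\lambda),\,5\norm P(1+T(0)/\lambda)\}$, which matches the statement. It yields
\[
\EE\bigl[\norm{\hat U(n)-u}_P^2\bigr]\le\frac{t}{n+r}=\frac{\lambda t}{\lambda(n+1)+T(0)},
\]
together with the analogous tail bound $(t+2q)/(\varepsilon^2(n+r))$, which equals the stated $\frac{\lambda}{\varepsilon^2}\cdot\frac{t+2q}{\lambda(n+1)+T(0)}$.

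The only point needing care is condition (iv). The hypothesis bounds $\norm{R_i}_P$, and the triangle inequality for the seminorm gives $\norm{R_i-\lambda\hat U(n)}_P\le D+\lambda\sqrt{\norm P}$ pointwise. This passes to the conditional expectation trivially, so I expect no real obstacle; the rest is bookkeeping of $r$ and $c$.
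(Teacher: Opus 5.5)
Your proposal is correct and follows the paper's proof essentially verbatim: the paper also applies Theorem~\ref{res:RM:quant:fast} with $K=(D+\lambda\sqrt{\norm{P}})^2$, $L=5\norm{P}$, $q=K/\lambda^2$, $r=1+T(0)/\lambda$ and $c=2A/\lambda$, and reuses conditions (i)--(iv) and (vi) from the proof of Theorem~\ref{Lemma-urns-slow}. Your direct use of $\norm{R_i}_P\le D$ with the triangle inequality for $\norm{\cdot}_P$ in (iv) matches this theorem's hypothesis exactly, and the only thing left implicit is the degenerate case $P=0$ (needed for $L>0$), which the paper notes is trivial.
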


\begin{proof}
From the proof of Theorem~\ref{Lemma-urns-slow}, we just need to verify that under the assumption that $A>\frac{\lambda}{2}$, condition (v'') holds with $K:=(D+\lambda\sqrt{\norm{P}})^2$. Namely, we will show that it holds for 
   \[
    q:=\left(\frac{D+\lambda\sqrt{\norm{P}}}{\lambda}\right)^2> 0, \quad r:=1+\frac{T(0)}{\lambda}>1, \quad c:=\frac{2A}{\lambda}>1.
   \]
   Indeed, we have
   \begin{equation*}
       K\gamma_n^2=(D+\lambda\sqrt{\norm{P}})^2\frac{1}{(\lambda(n+1)+T(0))^2}=\frac{\left(\frac{D+\lambda\sqrt{\norm{P}}}{\lambda}\right)^2}{(n+(1+\frac{T(0)}{\lambda}))^2}=\frac{q}{(n+r)^2}
   \end{equation*}
   and
   \begin{equation*}
       2A\gamma_n=\frac{2A}{\lambda}\frac{1}{n+(1+\frac{T(0)}{\lambda})}=\frac{c}{n+r}.
   \end{equation*}
   The result now follows from Theorem~\ref{res:RM:quant:fast} with simple computations.
\end{proof}
For $1\le p \le d$, write $J_p$ for the $d \times d$ matrix formed by taking the first $p$ columns of the identity matrix and setting the remaining columns to 0, i.e.\ $\mathrm{diag}(1,\ldots,1,0,\ldots,0)$. Our quantitative results on the asymptotic distribution of the outdegrees in the random tree models considered in this paper will be consequences of Theorem \ref{Lemma-urns-fast}, with  $P$ set to be $J_p$ for suitable $p$. Here and throughout the paper, we write $\langle x,y \rangle_p:= \langle x,y \rangle_{J_p}$ and $\norm{x}_p:= \norm{x}_{J_p}$ for any  $x,y \in \RR^d$. If $x:=(x_0,\ldots,x_{d-1}) \in \RR^d$ and $1\le p \le d$ we write $x|_p$ for the truncated vector $(x_0,\ldots,x_{p-1}) \in \RR^p$. One can easily verify that $\langle x,y \rangle_p:=\langle x|_p,y|_p \rangle$ which entails $\norm{x}_p:= \norm{x|_p}$.\\

We have the immediate corollary of Theorem \ref{Lemma-urns-fast}, for the case $P=J_p$.

\begin{corollary}\label{Lemma-urns-fast2}
 Let  $1\le p \le d$. Assume that for all $n\in\NN$ 
  \begin{equation*}
    \langle \hat{U}(n)(R-\lambda I),\hat{U}(n) -u\rangle_p \le-A\norm{\hat{U}(n) -u}_p^2,\ \text{ for some }\ u \in \mathcal{S} \mbox{ and } A > \frac{\lambda}{2}.
\end{equation*}
 Then
  \[
\EE\left[\norm{\hat{U}(n)-u}_p^2\right]\leq\frac{\lambda t}{\lambda(n+1)+T(0)} \text{ and }\PP\left(\exists m\geq n\left(\norm{\hat{U}(n)-u}_p\geq \varepsilon\right)\right)\leq \frac{\lambda}{\varepsilon^2}\cdot\frac{t+2q}{\lambda(n+1)+T(0)}
\]
for all $n\in\mathbb{N}$ and $\varepsilon>0$, where $t\geq \max\left\{\lambda q/(2A-\lambda),5(1+T(0)/\lambda)\right\}$,
$q:=\left(\frac{D+\lambda}{\lambda}\right)^2$ and $D\ge \max_{i\le d-1}\norm{R_i}_p$.
\end{corollary}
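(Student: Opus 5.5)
This is a direct specialization of Theorem~\ref{Lemma-urns-fast} to $P=J_p$, so the plan is to check that $J_p$ meets its hypotheses and that the constants there reduce to those stated here.

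First, $J_p=\mathrm{diag}(1,\ldots,1,0,\ldots,0)$ is symmetric and positive semi-definite, and it is nonzero since $p\ge 1$. Its nonzero eigenvalues all equal $1$, so the operator norm is $\norm{J_p}=1$. By the definition $\langle x,y\rangle_p=\langle x,y\rangle_{J_p}$, the hypothesis of the corollary is exactly condition $(**)$ of Theorem~\ref{Lemma-urns-fast} with $P=J_p$, the same $u\in\mathcal{S}$, and the same $A>\lambda/2$.

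Next, substitute $\norm{P}=1$ into the constants of Theorem~\ref{Lemma-urns-fast}. The quantity $q=\bigl((D+\lambda\sqrt{\norm{P}})/\lambda\bigr)^2$ becomes $\bigl((D+\lambda)/\lambda\bigr)^2$. The lower bound $5\norm{P}(1+T(0)/\lambda)$ on $t$ becomes $5(1+T(0)/\lambda)$. The requirement $D\ge\max_i\norm{R_i}_P$ becomes $D\ge\max_i\norm{R_i}_p$. The two conclusions of Theorem~\ref{Lemma-urns-fast}, the $L^2$ bound and the maximal probability bound, are then verbatim the conclusions of the corollary, since $\norm{\cdot}_{J_p}=\norm{\cdot}_p$.

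The only step needing care is that the constants used inside the proof of Theorem~\ref{Lemma-urns-slow}, which Theorem~\ref{Lemma-urns-fast} reuses, remain valid with the $p$-seminorm.

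For condition (iv), write $F(\hat U(n))+\Delta M_{n+1}=R_{\xi(n+1)}-\lambda\hat U(n)$ and apply the triangle inequality for the seminorm $\norm{\cdot}_p$. Since $\norm{\hat U(n)}_p=\norm{\hat U(n)|_p}\le\norm{\hat U(n)}\le 1$, this gives $\norm{R_{\xi(n+1)}-\lambda\hat U(n)}_p\le D+\lambda$.

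For condition (vi), $\norm{\hat U(0)-u}_p^2\le(\norm{\hat U(0)}_p+\norm{u}_p)^2\le 4<5$.

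Neither check presents a real obstacle, so the corollary follows directly from Theorem~\ref{Lemma-urns-fast}.
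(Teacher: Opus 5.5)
Your proposal is correct and follows the paper's argument: specialize Theorem~\ref{Lemma-urns-fast} to $P=J_p$, using that $J_p$ is symmetric, positive semi-definite and has $\norm{J_p}=1$, then read off $q$, the lower bound on $t$, and the two conclusions. Given the statement of Theorem~\ref{Lemma-urns-fast}, your extra re-check of conditions (iv) and (vi) in the seminorm $\norm{\cdot}_p$ is not strictly needed, but it is a sensible safeguard: the proof of Theorem~\ref{Lemma-urns-slow} bounds (iv) using the Euclidean norms of the rows $R_i$, whereas the corollary assumes only $D\ge\max_i\norm{R_i}_p$.
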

\begin{proof}
   The proof follows from Theorem \ref{Lemma-urns-fast} after noting that $J_p$ is symmetric, positive-semi-definite and $\norm{J_p}=1$.
\end{proof}

\begin{remark}
    \label{rem:A:bound}
   All our quantitative results for random trees, which we shall present in the following section, will be consequences of  Corollary \ref{Lemma-urns-fast2}. In particular, in these applications, we take $p \in \{d,d-1\}$ and establish a condition stronger than the premise of the 
theorem requires. Concretely, in each case we exhibit $A > \lambda/2$ such that
\[
\langle w, w(R - \lambda I)\rangle_p \leq -A\|w\|_p^2 
\quad \text{for all } w \in \mathbb{R}^{d} \text{ with } 
\sum_{i=0}^{d-1} w_i = 0. \tag{***}
\]
Applying this with $w = \hat{U}(n) - u$, where $u$ is an eigenvector of $R$ 
corresponding to the eigenvalue $\lambda$ whose components sum to $1$, yields 
the premise of Corollary \ref{Lemma-urns-fast2}; note that the components of $w$ 
sum to $0$ since the components of both $u$ and $\hat{U}(n)$ sum to $1$.
\end{remark}

Such an $A$ satisfying $(***)$ can always be found.
\begin{proposition}
\label{prop:la}
Suppose $p \in \{d,d-1\}$. Then
    \[
\langle w, w(R - \lambda I)\rangle_p \leq -A|w\|_p^2 
\quad \text{for all } w \in \mathbb{R}^{d} \text{ with } 
\sum_{i=0}^{d-1} w_i = 0.
\]
for 
\[
A:= -\lambda_{\max}(\operatorname{symm}(C_p J_p
(R^\top - \lambda I)C_p^\top)).
\]
Where $C_p$ is a $p \times d$ matrix, defined via
\begin{equation*}
    C_{p} := \begin{cases}
        I & \text{if }\ p =d\\[0.5cm]
	(I_{d-1},\ (-1,\ldots,-1)^\top) & \text{if }\ p=d-1    
\end{cases}
\end{equation*}
and $I_{d-1}$ denotes the $(d-1) \times (d-1)$ identity matrix.
\end{proposition}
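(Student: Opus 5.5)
The idea is to parametrise the subspace $W:=\{w\in\RR^d:\sum_i w_i=0\}$ by the truncated vector $v:=w|_p\in\RR^p$. Once that is done, the quadratic form $\langle w,w(R-\lambda I)\rangle_p$ becomes a quadratic form in $v$, and the standard Rayleigh-quotient bound for symmetric matrices finishes the argument.

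First I check that for $w\in W$ we have $w=v\,C_p$. For $p=d$ this is trivial, since $C_d=I$ and $v=w$. For $p=d-1$, the constraint gives $w_{d-1}=-\sum_{i<d-1}w_i$, and $v\,C_{d-1}=(v_0,\ldots,v_{d-2},-\sum_i v_i)$ is exactly $w$. In both cases $\norm{w}_p=\norm{w|_p}=\norm{v}$.

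Next I rewrite the form. Since $\langle x,y\rangle_p=\langle xJ_p,y\rangle$ and $J_p$ is symmetric, we have
\[
\langle w,w(R-\lambda I)\rangle_p=\langle wJ_p,\,w(R-\lambda I)\rangle = wJ_p(R-\lambda I)^\top w^\top=wJ_p(R^\top-\lambda I)w^\top .
\]
Substituting $w=vC_p$ gives $v\,C_pJ_p(R^\top-\lambda I)C_p^\top\,v^\top=:v\,Mv^\top$, where $M$ is a $p\times p$ matrix. Here I identify $J_p$ with the appropriate $d\times d$ diagonal projection, so that $J_p w^\top$ keeps the first $p$ coordinates.

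A scalar quadratic form only sees the symmetric part, so $vMv^\top=v\operatorname{symm}(M)v^\top$. Since $\operatorname{symm}(M)$ is real symmetric, the Rayleigh quotient gives $v\operatorname{symm}(M)v^\top\le\lambda_{\max}(\operatorname{symm}(M))\norm{v}^2$. All eigenvalues of a real symmetric matrix are real, so the eigenvalue of largest real part is simply the largest eigenvalue. With $A:=-\lambda_{\max}(\operatorname{symm}(M))$ this reads $\langle w,w(R-\lambda I)\rangle_p\le -A\norm{w}_p^2$, which is the claim.

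The only delicate point I expect is bookkeeping: getting the transposes and the placement of $J_p$ right in $\langle wJ_p, w(R-\lambda I)\rangle$, given the row-vector convention $\langle x,y\rangle_P=\langle xP,y\rangle$. I would verify that the matrix obtained matches $C_pJ_p(R^\top-\lambda I)C_p^\top$ exactly, possibly up to transposition, which does not change the symmetric part. I would also confirm that $\norm{w}_p=\norm{v}$ in both cases of $p$. Nothing else is needed, and in particular no positivity of $A$ is claimed in the statement.
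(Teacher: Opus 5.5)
Your proposal is correct and follows the paper's proof essentially step for step. Like the paper, you use $w=(w|_p)C_p$ on the sum-zero hyperplane to rewrite $\langle w,w(R-\lambda I)\rangle_p=wJ_p(R^\top-\lambda I)w^\top$ as $vMv^\top$ with $v=w|_p$ and $M=C_pJ_p(R^\top-\lambda I)C_p^\top$, and then bound it by $\lambda_{\max}(\operatorname{symm}(M))\norm{v}^2=\lambda_{\max}(\operatorname{symm}(M))\norm{w}_p^2$. Your transpose bookkeeping already yields exactly the matrix in the statement, so the hedge ``up to transposition'' is not needed. Since the Rayleigh bound also covers $v=0$, you can skip the paper's separate case $\norm{w}_p=0$.
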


\begin{proof}
     Fix 
$w \in \mathbb{R}^d$ with components summing to $0$. If $\|w\|_p = 0$ 
then the result trivially follows, so assume $\|w\|_p \neq 0$. One verifies that $(w|_p)C_p = w$ in 
both cases: the case $p = d$ is trivial, and the case $p = d-1$ follows from 
the fact that the components of $w$ sum to $0$. Hence, for all such $w$,
\begin{align*}
    \langle w, w(R - \lambda I)\rangle_p 
    &= \langle wJ_p,\, w(R - \lambda I)\rangle 
    = wJ_p(R^\top - \lambda I)w^\top 
    = (w|_p)\bigl(C_p J_p(R^\top - \lambda I)C_p^\top\bigr)(w|_p)^\top \\
    &= \|w\|_p^2 \cdot 
    \frac{(w|_p)}{\|(w|_p)\|}
    \bigl(C_p J_p(R^\top - \lambda I)C_p^\top\bigr)
    \frac{(w|_p)^\top}{\|(w|_p)\|}.
\end{align*}
It therefore remains to maximise
\[
\frac{(w|_p)}{\|(w|_p)\|}
\bigl(C_p J_p(R^\top - \lambda I)C_p^\top\bigr)
\frac{(w|_p)^\top}{\|(w|_p)\|}
\]
over all $w$ with components summing to $0$. This is equivalent to maximising
\[
v\,\bigl(C_p J_p(R^\top - \lambda I)C_p^\top\bigr)v^\top
\]
over all $v \in \mathbb{R}^p$ with $\|v\| = 1$, which equals 
$\lambda_{\max}(\operatorname{symm}(C_p J_p(R^\top - \lambda I)C_p^\top))$, 
the largest eigenvalue of the symmetric part of 
$C_p J_p(R^\top - \lambda I)C_p^\top$ and the result follows.
\end{proof}
Thus, for $p \in \{d,d-1\}$, if 
\[
\lambda_{\max}\bigl(\operatorname{symm}(C_p J_p(R^\top - \lambda I)
C_p^\top)\bigr)<0,
\]
One is able to use Theorem \ref{Lemma-urns-slow} or Theorem \ref{Lemma-urns-fast} to obtain rates for the convergence of  $\hat{U}(n)$ both almost surely and in $L^2$. In particular, we have Theorem \ref{thrm:main:app}.
\begin{proof}[Proof of Theorem \ref{thrm:main:app}]
Proposition \ref{prop:la}, with $p=d$, implies
 \[
\langle w, w(R - \lambda I)\rangle_p \leq \lambda_{\max}(\operatorname{symm}(
(R - \lambda I)))|w\|_p^2 
\quad \text{for all } w \in \mathbb{R}^{d} \text{ with } 
\sum_{i=0}^{d-1} w_i = 0.
\]
Thus, by Remark \ref{rem:A:bound}, the premise of Corollary \ref{Lemma-urns-fast2} is satisfied with 
\[
A:= -\lambda_{\max}(\operatorname{symm}(
(R - \lambda I)))
\]
with the condition that $A> \lambda/2$ coming from the assumption of the theorem.
    
\end{proof}

\begin{remark}
If one can show that
\[
\lambda_{\max}(\operatorname{symm}(C_p J_p (R^\top - \lambda I) C_p^\top)) < 0 
 \text{ for } p = d \mbox{ or } d-1
\]
for all tenable, irreducible generalized P\'olya urns, then not only would 
Theorem~\ref{urns-convergence} follow, but one would also obtain quantitative 
rates for the almost sure and $L^2$ convergence (in the case $p=d-1$, one can conclude 
the $L^2$ and almost sure convergence of $\hat{U}(n)$ to $u$ with respect to 
the standard norm from the respective convergence result with respect to $\|\cdot\|_{d-1}$, as the components of $\hat{U}(n)$ and $u$ sum to 1). This question remains open.
\end{remark}

In our applications, rather than computing,
\[
\lambda_{\max}\bigl(\operatorname{symm}(C_p J_p(R^\top - \lambda I)
C_p^\top)\bigr),
\]
directly, we find a suitable $A$, satisfying condition $(***)$, via algebraic 
manipulations.

\section{Proof of quantitative results on random trees.}

\subsection{Proof of Theorems \ref{thrm:main:rec} }
Following the proof of \cite[Theorem 1.1]{janson2005asymptotic}, fix $M \geq 1$ 
and define
\[
U_i(n) := \begin{cases} X_i(n) & 0 \leq i \leq M-1, \\ 
\displaystyle\sum_{j \geq m} X_j(n) & i = M. \end{cases}
\]
Taking notation as in Section \ref{subsec:urns}, $(U(n))_{n \geq 0}$ is a 
balanced $(M+1)$-colour generalized P\'{o}lya urn with replacement matrix entries
$R_{ij} = -\delta_{i,j} + \delta_{i+1,j} + \delta_{0,j}$ for $i \leq M-1$ 
and $R_{Mj} = \delta_{0,j}$. The total number of balls added at each step is 
$\lambda = 1$. The matrix $R$ is irreducible (a fact we shall not use, in addition to the fact that the urn is tenable) and, 
as verified in \cite{janson2005asymptotic}, the vector 
$u = (1/2, 1/4, \ldots, 2^{-M}, 2^{-M})$ is the unique left eigenvector of $R$ 
with eigenvalue $\lambda = 1$ whose components are positive and sum to 1; 
existence and uniqueness are guaranteed by Perron--Frobenius theory. Furthermore, we have $T(n) = n+1$.\\

For all $i < M$, we have the following,
\begin{align*}
    \EE\left[\left|\frac{X_i(n)}{n} - \frac{1}{2^{i+1}}\right|^2\right]
    &= \EE\left[\left|\frac{X_i(n)}{n+1} -\frac{1}{2^{i+1}} 
        + \frac{X_i(n)}{n(n+1)}\right|^2\right] \\
    &\leq 2\EE\left[\left|\hat{U}_i(n) - \frac{1}{2^{i+1}}\right|^2\right] 
        + 2\EE\left[\left|\frac{\hat{U}_i(n)}{n}\right|^2\right] \\
    &\leq 2\EE\left[\left\|\hat{U}(n) - u\right\|_M^2\right] + \frac{2}{n^2},
\end{align*}
and
\begin{align*}
  &\PP\left(\exists\, m \geq n \left(\left|\frac{X_i(m)}{m} 
      - \frac{1}{2^{i+1}}\right| \geq \varepsilon\right)\right)\\
  &\quad\leq \PP\left(\exists\, m \geq n \left(\left|\hat{U}_i(m) 
      - \frac{1}{2^{i+1}}\right| + \left|\frac{\hat{U}_i(m)}{m}\right| 
      \geq \varepsilon\right)\right) \\
  &\quad\leq \PP\left(\exists\, m \geq n \left(\left|\hat{U}_i(m) 
      - \frac{1}{2^{i+1}}\right| \geq \frac{\varepsilon}{2}\right)\right) 
    + \PP\left(\exists\, m \geq n \left(\frac{\hat{U}_i(m)}{m}
      \geq \frac{\varepsilon}{2}\right)\right) \\
  &\quad\leq \PP\left(\exists\, m \geq n \left(
      \left\|\hat{U}(m) - u\right\|_M \geq \frac{\varepsilon}{2}\right)\right) 
    + \PP\left(\exists\, m \geq n \left(\frac{1}{m} \geq \frac{\varepsilon}{2}
      \right)\right) \\
  &\quad= \PP\left(\exists\, m \geq n \left(
      \left\|\hat{U}(m) - u\right\|_M \geq \frac{\varepsilon}{2}\right)\right),
\end{align*}
where the final equality holds whenever $n > 2/\varepsilon$ and with $\|\cdot\|_M$ denotes the norm in $\RR^M$ as before. Thus, to prove Theorem \ref{thrm:main:rec}, it therefore suffices 
to bound
\[
\EE\left[\left\|\hat{U}(n) - u\right\|_M^2\right] \quad \text{and} \quad 
\PP\left(\exists\, m \geq n \left(\left\|\hat{U}(m) - u\right\|_M 
\geq \frac{\varepsilon}{2}\right)\right).
\]
We do this by applying Corollary \ref{Lemma-urns-fast2}. We first have the following lemma.
\begin{lemma}
\label{lem:rec:ineq}
    For all $w \in \RR^{M+1}$ with $\sum_{i=0}^{M}w_i =0$, we have 
    \[
    \langle w, w(R- I)\rangle_M \le -\|w\|_M^2
    \]
\end{lemma}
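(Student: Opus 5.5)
The plan is a direct computation. I will write out the first $M$ coordinates of $w(R-I)$ explicitly, which is all that $\langle\cdot,\cdot\rangle_M$ sees. Then I will bound the resulting quadratic form with an elementary AM--GM estimate on the cross terms. I do not intend to go through Proposition~\ref{prop:la}, since the structure of $R$ is simple enough to handle by hand.

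\textbf{Step 1: compute $w(R-I)$.} Recall $(wR)_j=\sum_{i=0}^{M} w_i R_{ij}$, with rows $R_i=-e_i+e_{i+1}+e_0$ for $i\le M-1$ and $R_M=e_0$.
\begin{itemize}
\item For $j=0$: every row contributes $w_i$ through the $e_0$ term, and row $0$ contributes an extra $-w_0$. The first part sums to $\sum_i w_i=0$ by hypothesis, so $(wR)_0=-w_0$ and hence $(w(R-I))_0=-2w_0$. This is the only place the hypothesis $\sum_i w_i=0$ is used, and it is what removes any dependence on the untracked coordinate $w_M$.
\item For $1\le j\le M-1$: only rows $j$ (through $-e_j$) and $j-1$ (through $e_{j}$) contribute. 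This gives $(wR)_j=w_{j-1}-w_j$, so $(w(R-I))_j=w_{j-1}-2w_j$.
\end{itemize}

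\textbf{Step 2: assemble the form.} Since $\langle x,y\rangle_M=\langle x|_M,y|_M\rangle$, we get
\[
\langle w,w(R-I)\rangle_M=-2\sum_{j=0}^{M-1}w_j^2+\sum_{j=1}^{M-1}w_{j-1}w_j .
\]

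\textbf{Step 3: bound the cross terms.} Use $w_{j-1}w_j\le \tfrac12(w_{j-1}^2+w_j^2)$. Each index in $\{0,\dots,M-1\}$ appears in at most two consecutive pairs, so
\[
\sum_{j=1}^{M-1}w_{j-1}w_j\le\sum_{j=0}^{M-1}w_j^2=\|w\|_M^2 .
\]
Combining with Step 2 gives $\langle w,w(R-I)\rangle_M\le-\|w\|_M^2$, which is the claim. The case $M=1$ is included, since the cross-term sum is then empty.

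There is no real obstacle here. The only point needing care is the index bookkeeping in Step 1, particularly the $0$-th coordinate and the use of $\sum_i w_i=0$ there. The resulting $A=1$ satisfies $A>\lambda/2=1/2$, as Corollary~\ref{Lemma-urns-fast2} requires.
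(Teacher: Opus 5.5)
Your proof is correct and takes essentially the same route as the paper. Both arguments compute the first $M$ coordinates of $w(R-I)$, using $\sum_i w_i=0$ only in the $0$-th coordinate to get $-2w_0$, which yields $-2\sum_{j<M} w_j^2+\sum_{j=1}^{M-1} w_{j-1}w_j$, and then bound the cross terms by AM--GM. Your coarser estimate $\sum_{j=1}^{M-1} w_{j-1}w_j\le \|w\|_M^2$ avoids the paper's endpoint bookkeeping and gives the same constant $A=1$.
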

\begin{proof}
    We have $(R-I)_{ij} = -2\delta_{i,j} +\delta_{i+1,j}+\delta_{0,j}$ for  $i\le M-1$ and $(R-I)_{Mj}=\delta_{0,j} -\delta_{M,j}$. Thus, we have 

    \[
    (w(R- I))_0 = \sum_{i=0}^{M-1}(-2w_i\delta_{i,0} + w_i\delta_{i+1,0}+w_i\delta_{0,0}) +w_M = -w_0+\sum_{i=1}^Mw_i =-2w_0
    \]
    where we use the fact that the components of $w$ sum to 0 to obtain the last equality. Now for $1\le j\le M-1$ we have 
    \[
    (w(R- I))_j = \sum_{i=0}^{M-1}(-2w_i\delta_{i,j} + w_i\delta_{i+1,j}+w_i\delta_{0,j})  = -2w_j+w_{j-1}.
    \]
    So we have 
    \[
    \langle w, w(R- I)\rangle_M = -2w_0^2 -2 \sum_{j=1}^{M-1}w_j^2 +  \sum_{j=1}^{M-1}w_{j}w_{j-1}. 
    \]
Thus, applying the AM-GM inequality, $ab \leq \frac{1}{2}(a^2 + b^2)$, to each product $w_jw_{j-1}$, yields,
\[
w_jw_{j-1} \leq \frac{1}{2}(w_j^2 + w_{j-1}^2), \quad 1 \leq j \leq M-1.
\]
Thus, we have 
\[
 \sum_{j=0}^{M}w_{j}w_{j-1} \leq \frac{1}{2}\left(w_0^2 + 2\sum_{i=1}^{M-2}w_j^2 + w_{M-1}^2\right).
\]
This implies 
\begin{align*}
\langle w, w(R- I)\rangle_M \leq -\frac{3}{2}w_0^2 - \sum_{j=1}^{M-2}w_j^2  -\frac{3}{2}w_{M-1}^2\le-\|w\|_M^2
\end{align*}
\end{proof}
\begin{remark}
In \cite[Theorem 1.3]{bjornberg2025random}, the authors obtain the asymptotic distribution of outdegrees in a variant of the random recursive tree in which a \emph{doubling event} occurs whenever the root is selected, showing that, almost surely, the limiting outdegree distribution for this model coincides with that of the ordinary random recursive tree, namely $u$. To this end, they define $U(n)$ analogously to this section, but for the doubling model, and show that  $(\hat{U}(n))$ satisfies the following stochastic approximation recurrence:
\[
    \hat{U}(n+1) = \hat{U}(n) + \gamma_n\bigl(F(\hat{U}(n)) + \Delta M_{n+1} 
    + \eta_{n+1}\bigr),
\]
where $(\Delta M_{n+1})$ is a martingale difference sequence, $(\eta_{n+1})$ is an error term, $(\gamma_n)$ is now \emph{stochastic}, and $F(x) = x(R - \lambda I) = x(R - I)$, with $R$ and $\lambda = 1$ the replacement matrix and balance of the random recursive tree, respectively. They then apply standard techniques from stochastic approximation theory, such as the Robbins--Siegmund theorem \cite{robbins-siegmund:71:lemma}. To apply these techniques, they must show, as part of their proof, that
\[
\langle x - u, F(x) \rangle < 0 \quad \text{for all } x \in \mathcal{S}
\]
(cf.\ condition $(*)$ in Section~\ref{subsec:urns}); to this end, they establish
\begin{equation}
\label{eq:double}
    \langle w, F(w)\rangle \leq -2w_0^2 
\quad \text{for all } w \in \mathbb{R}^{M+1} \text{ with } 
\sum_{i=0}^{M} w_i = 0,
\end{equation}
which suffices. This bound is not directly suitable for our purposes; however, we observe that a refinement of the calculation used to establish \eqref{eq:double} in the proof of \cite[Theorem~1.3]{bjornberg2025random} yields the `stronger' bound
\[
    \langle w, F(w)\rangle \leq -\frac{1}{2}\norm{w}^2 
\quad \text{for all } w \in \mathbb{R}^{M+1} \text{ with } 
\sum_{i=0}^{M} w_i = 0.
\]
However, this bound alone would not allow us to apply Corollary \ref{Lemma-urns-fast2}, since it gives only $A = \lambda/2$ rather than $A > \lambda/2$; it would allow only the application of Theorem \ref{Lemma-urns-slow}, resulting in a worse rate. Working instead with $\langle w, F(w)\rangle_M$, as we do in Lemma \ref{lem:rec:ineq}, allows one to obtain the sharper bounds of Corollary \ref{Lemma-urns-fast2}.

We conclude by noting that the quantitative results for stochastic approximation algorithms discussed here are not directly applicable to the recursive tree model of \cite{bjornberg2025random}, since the step sizes there are stochastic. We anticipate that the methods of \cite{NeriPowell2026,mythesis} could yield quantitative results for this model, and we leave this for future investigation.
\end{remark}
Now we have 
   \[
   \sum_{j=0}^{M} R_{0j}^2= \sum_{j=0}^{M} (-\delta_{0,j} + \delta_{1,j} + \delta_{0,j})^2= 1.
   \]
    \[
   \sum_{j=0}^{M} R_{Mj}^2= \sum_{j=0}^{d-1} \delta_{0,j}^2= 1.
   \]
and, for $0 < i \le M-1$
  \[
   \sum_{j=0}^{M} R_{ij}^2= \sum_{j=0}^{M} (-\delta_{i,j} + \delta_{i+1,j} + \delta_{0,j})^2= 3.
   \]
So, $\max_{i\le M} \norm{R_i}=\sqrt{3}\leq 2$. Thus, applying Corollary \ref{Lemma-urns-fast2} with $A :=1$ (by Lemma \ref{lem:rec:ineq}; see Remark \ref{rem:A:bound}), $p=M$ and $D:=2$ yields,
\[
\EE\bigl[\|\hat{U}(n) - u\|_M^2\bigr] \leq \frac{9}{n+2}
 \text{ and } 
\PP\left(\exists\, m \geq n \left(\|\hat{U}(m) - u\|_M \geq \varepsilon\right)\right) 
\leq  \frac{27}{\varepsilon^2(n+2)}.
\]
Hence, for all $i < M$,
\begin{align*}
    \EE\left[\left|\frac{X_i(n)}{n} - \frac{1}{2^i}\right|^2\right]
    &\leq \frac{18}{n+2} + \frac{2}{n^2}\le \frac{20}{n},
\end{align*}
and, for $n > 2/\varepsilon$,
\begin{align*}
  \PP\left(\exists\, m \geq n \left(\left|\frac{X_i(m)}{m} 
  - \frac{1}{2^i}\right| \geq \varepsilon\right)\right) 
  \leq \frac{108}{\varepsilon^2n}.
\end{align*}
This completes the proof of Theorem \ref{thrm:main:rec} where, for a fixed $i$, we take $M:=i+1$.

\subsection{Proof of Theorem \ref{thrm:main:plane}}
Following the proof of Theorem 1.3 of \cite{janson2005asymptotic}, fix $M \geq 1$ 
and define
\[
U_i(n) := \begin{cases}(i+1) Y_i(n) & 0 \leq i \leq m-1, \\ \displaystyle\sum_{j \geq m}(j+1) Y_j(n) & i = M, \end{cases}
\]
Then  $(U(n))_{n \geq 0}$ is a 
balanced $(M+1)$-colour generalized P\'{o}lya urn with replacement matrix entries
$R_{ij} = -(i+1)\delta_{i,j} +(i+2)\delta_{i+1,j}+\delta_{0,j}$ for $i \leq M-1$ 
and $R_{Mj}=\delta_{0,j}+\delta_{M,j}$ (which, as in the recursive tree case, is irreducible and the urn is tenable). Moreover, as shown in \cite{janson2005asymptotic}, one can verify that $u$, defined as 
 \[
u_i := \frac{2}{(i+2)(i+3)},\mbox{ for }  0 \leq i \leq M-1,\mbox{ and } u_M := \frac{2}{M+2},
\]
is an eigenvector, with eigenvalue $\lambda = 2$, which is the change in the number of balls at each step of the urn. Furthermore, we have $T(n)=2n+1$.\\

For all $i<M$, we have
\begin{align*}
    \EE&\left[ \left| \frac{Y_i(n)}{n} - \frac{4}{(i+1)(i+2)(i+3)} \right|^2 \right]= \EE\left[ \left| \frac{2}{i+1}\left( \frac{(i+1)Y_i(n)}{2n} - \frac{2}{(i+2)(i+3)} \right) \right|^2 \right]\\
    &\qquad = \EE\left[ \left| \frac{2}{i+1}\left( \frac{(i+1)Y_i(n)}{2n+1} - \frac{2}{(i+2)(i+3)} + \frac{(i+1)Y_i(n)}{2n(2n+1)} \right) \right|^2 \right]\\
    &\qquad = \EE\left[ \left| \frac{2}{i+1}\left( \hat{U}_i(n) - \frac{2}{(i+2)(i+3)} \right) + \frac{1}{i+1}\frac{\hat{U}_i(n)}{n} \right|^2 \right]\\
    &\qquad \leq 2\EE\left[ \left| \frac{2}{i+1}\left( \hat{U}_i(n) - \frac{2}{(i+2)(i+3)}\right) \right|^2 \right] + 2\EE\left[ \left| \frac{\hat{U}_i(n)}{(i+1)n} \right|^2 \right]\\
    &\qquad \leq\frac{8}{(i+1)^2}\EE\left[ \left\| \hat{U}(n) - u \right\|_M^2 \right] + \frac{2}{(i+1)^2n^2}
\end{align*}

and

\begin{align*}
    \PP&\left( \exists m\geq  n \left( \left|\frac{Y_i(m)}{m} - \frac{4}{(i+1)(i+2)(i+3)}\right|\geq \eps \right) \right)\\
    &\quad \leq \PP \left( \exists m\geq n \left( \frac{2}{i+1}\left| \hat{U}_i(m)-\frac{2}{(i+2)(i+3)}\right| + \left|\frac{\hat{U}_i(m)}{(i+1)m}\right|\geq \eps \right) \right)\\
    &\quad \leq \PP \left( \exists m\geq n \left( \frac{2}{i+1}\left| \hat{U}_i(m)-\frac{2}{(i+2)(i+3)}\right|\geq \frac{\eps}{2}\right)\right) + \PP\left( \exists m\geq n \left( \frac{\hat{U}_i(m)}{(i+1)m}\geq \frac{\eps}{2} \right) \right)\\
    &\quad \leq \PP \left( \exists m\geq n \left( \frac{2}{i+1}\left\| \hat{U}(m)-\frac{2}{(i+2)(i+3)}\right\|_M\geq \frac{\eps}{2}\right)\right) + \PP\left( \exists m\geq n \left( \frac{1}{m}\geq \frac{\eps}{2} \right) \right)\\
    &\quad = \PP \left( \exists m\geq n \left( \left\| \hat{U}(m)-\frac{2}{(i+2)(i+3)}\right\|_M\geq \frac{\eps(i+1)}{4}\right) \right),
\end{align*}
whenever $n>2/\eps$ and with $\|\cdot\|_M$ denotes the norm in $\RR^M$ as before. Thus, as in the proof of Theorem \ref{thrm:main:rec}, the result will follow from computing an upper bound on both
\[
\EE\left[ \left\| \hat{U}(n) - u \right\|_M^2 \right]\ \text{ and }\ \PP \left( \exists m\geq n \left( \left\| \hat{U}(m)-u\right\|_M\geq \frac{\eps(i+1)}{4}\right) \right).
\]

We have the following lemma, akin to Lemma \ref{lem:rec:ineq}.
\begin{lemma}\label{lemma_plane_trees}
    For all $w \in \RR^{M+1}$ with $\sum_{i=0}^Mw_i =0$, we have 
    \[
    \langle w, w(R- \lambda I)\rangle_M \le -\frac{3}{2}\norm{w}_M
    \]
\end{lemma}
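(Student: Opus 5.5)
The plan mirrors the proof of Lemma~\ref{lem:rec:ineq}. I will compute the first $M$ coordinates of $w(R-\lambda I)$ explicitly, using $\lambda=2$ and the hypothesis $\sum_{i=0}^M w_i=0$. This expresses $\langle w, w(R-2I)\rangle_M$ as an explicit quadratic form in $w_0,\dots,w_{M-1}$. I will then absorb the off-diagonal terms using $ab\le\frac12(a^2+b^2)$. The bound I aim for is $-\frac32\norm{w}_M^2$ (squared norm). This is what Remark~\ref{rem:A:bound} needs, with $A=\frac32>\lambda/2=1$; I read the unsquared $\norm{w}_M$ in the statement as a typo.

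First, the entries of $R-2I$. For $i\le M-1$ they are $(R-2I)_{ij}=-(i+3)\delta_{i,j}+(i+2)\delta_{i+1,j}+\delta_{0,j}$. For the last row they are $(R-2I)_{Mj}=\delta_{0,j}-\delta_{M,j}$. For the $0$-th coordinate, row $0$ contributes $(-3+1)w_0=-2w_0$, and every row $i\ge1$ (including $i=M$) contributes $w_i$. Hence
\[
(w(R-2I))_0=-2w_0+\sum_{i=1}^M w_i=-3w_0,
\]
using $\sum_i w_i=0$. For $1\le j\le M-1$, only row $j$ (diagonal) and row $j-1$ (superdiagonal) contribute, giving
\[
(w(R-2I))_j=-(j+3)w_j+(j+1)w_{j-1}.
\]
Therefore
\[
\langle w, w(R-2I)\rangle_M=-3w_0^2-\sum_{j=1}^{M-1}(j+3)w_j^2+\sum_{j=1}^{M-1}(j+1)w_jw_{j-1}.
\]

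Next, I apply $(j+1)w_jw_{j-1}\le \frac{j+1}{2}(w_j^2+w_{j-1}^2)$ to each cross term and collect the resulting coefficient of each $w_k^2$, $0\le k\le M-1$. The main point to check is that the weights are compatible: the weights $(j+1)$ on the cross terms grow, but the diagonal coefficients $(j+3)$ grow at the same rate.
\begin{itemize}
\item For an interior index $1\le k\le M-2$, $w_k^2$ receives $\frac{k+1}{2}$ from $j=k$ and $\frac{k+2}{2}$ from $j=k+1$. Its coefficient is $-(k+3)+\frac{2k+3}{2}=-\frac32$.
\item For $k=0$, only $j=1$ contributes, giving $-3+1=-2\le-\frac32$.
\item For $k=M-1$ (when $M\ge2$), only $j=M-1$ contributes, giving $-(M+2)+\frac{M}{2}\le-\frac32$.
\item If $M=1$, the form is just $-3w_0^2$.
\end{itemize}
In every case each coefficient is at most $-\frac32$, so $\langle w,w(R-2I)\rangle_M\le-\frac32\sum_{k<M}w_k^2=-\frac32\norm{w}_M^2$, as required. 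The only real obstacle is careful bookkeeping of the Kronecker deltas, especially the contribution of row $M$ to coordinate $0$ and the use of the zero-sum condition there.
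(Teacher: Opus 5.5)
Your proposal is correct and follows the paper's proof essentially line by line. It uses the same computation of the first $M$ coordinates of $w(R-2I)$, including the zero-sum step giving $-3w_0$, and the same weighted AM--GM bound on the cross terms, giving the same coefficients $-2$, $-\frac{3}{2}$ and $-\frac{M+4}{2}$. You are right that the intended bound is $-\frac{3}{2}\norm{w}_M^2$, and your separate handling of $M=1$, which the paper skips, is a harmless tidying.
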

\begin{proof}
    We have $(R-\lambda I)_{ij} = -(i+3)\delta_{i,j} +(i+2)\delta_{i+1,j}+\delta_{0,j}$ for  $i\le M-1$ and also $(R-\lambda I)_{Mj}=\delta_{0,j}-\delta_{M,j}$.
 Thus, 
 \begin{align*}
       (w(R- \lambda I))_0& =\sum_{i=0}^{M-1}(-(i+3)\delta_{i,0}w_i +(i+2)\delta_{i+1,0}w_i+\delta_{0,0}w_i) +w_M\\& = -2w_0+\sum_{i=1}^Mw_i =-3w_0
 \end{align*}
    where we use the fact that the components of $w$ sum to 0 to obtain the last equality. Now for $1\le j\le M-1$ we have 
    \[
    (w(R- I))_j = \sum_{i=0}^{M-1}(-(i+3)\delta_{i,j}w_i +(i+2)\delta_{i+1,j}w_i+\delta_{0,j}w_i)   = -(j+3)w_j+(j+1)w_{j-1}
    \]
    and we have 
    So we have 
    \[
    \langle w, w(R- \lambda I)\rangle_M =-3w_0^2 - \sum_{j=1}^{M-1}(j+3)w_j^2 +  \sum_{j=1}^{M-1}(j+1)w_{j}w_{j-1}. 
    \]
Applying AM-GM yields,
\[
\sum_{j=1}^{M-1} (j+1)w_jw_{j-1} \leq \frac{1}{2}\left(2w_0^2 + \sum_{j=1}^{M-2}(2j+3)w_j^2 + Mw_{M-1}^2\right).
\]

\begin{align*}
\langle w, w(R- \lambda I)\rangle &\leq  -3w_0^2 + w_0^2 +\frac{1}{2}\sum_{j=1}^{M-2}(2j+3)w_j^2 - \sum_{j=1}^{M-1} (j+3)w_j^2+ \frac{M}{2}w_{M-1}^2 \\
&= -2w_0^2-\frac{3}{2} \sum_{j=1}^{M-2}w_j^2-(M+2)w_{M-1}^2+ \frac{M}{2}w_{M-1}^2\\
&= -2w_0^2-\frac{3}{2} \sum_{j=1}^{M-2}w_j^2-\frac{M+4}{2}w_{M-1}^2 .
\end{align*}
and the result follows.
\end{proof} 
We have
\[
\|R_0\|^2=\sum_{j=0}^M(-\delta_{0j} + 2\delta_{1j} + \delta_{0j})^2 =4, \quad \|R_M\|^2=\sum_{j=0}^M(\delta_{0j}+\delta_{Mj})^2=2,
\]
and for $0<i\leq M-1$
\begin{align*}
    \|R_i\|^2&=\sum_{j=0}^M(-(i+1)\delta_{0j} + (i+2)\delta_{i+1,j} +\delta_{0j})^2\\
    &=(i+2)^2+(i+1)^2+1\\
    &\leq 2(i+2)^2\leq 2(M+2)^2.
\end{align*}
Hence, we have $\max_{i\leq M}\|R_i\|_M\leq\max_{i\leq M}\|R_i\|\leq \sqrt{2}(M+2)\leq 2(M+2)$.\\

We can now apply Corollary \ref{Lemma-urns-fast2} with $A:=3/2$ (by Lemma \ref{lemma_plane_trees}; see Remark \ref{rem:A:bound}), $p=M$ and $D:=2(M+2)$, to obtain
\[
\EE\left[ \left\| \hat{U}(n)-u\right\|_M^2 \right]\leq \frac{4(M+3)^2}{2n+3}\ \text{ and }\ \PP \left( \exists m\geq n \left( \|\hat{U}(m)-u\|_M>\eps\right)\right) \leq \frac{1}{\eps^2}\cdot\frac{6(M+3)^2}{2n+3}  .
\]
Thus, for all $i<M$ and $n\geq 1$,
\[
\EE\left[ \left| \frac{Y_i(n)}{n} - \frac{4}{(i+1)(i+2)(i+3)} \right|^2 \right]\leq \frac{32}{2n+3}\left(\frac{M+3}{i+1}\right)^2 + \frac{2}{(i+1)^2n^2}\leq \frac{17}{n}\left(\frac{M+3}{i+1}\right)^2
\]
and, for $n>2/\eps$,
\[
\PP\left( \exists m\geq  n \left( \left|\frac{Y_i(m)}{m} - \frac{4}{(i+1)(i+2)(i+3)}\right|\geq \eps \right) \right)\leq \frac{48}{\eps^2n}\left(\frac{M+3}{i+1}\right)^2.
\]
This completes the proof of Theorem~\ref{thrm:main:plane} by taking $M:=i+1$ and noting that for all $i \ge 0$, we have 
\[
\left(\frac{i+4}{i+1}\right)^2= \left(1+\frac{3}{i+1}\right)^2\le 16.
\]

\subsection{Proof of Theorem \ref{thrm:main:plane}}
Again, following Janson (cf.\ \cite[Section 6]{janson2005asymptotic}), we define
\[
U_i(n) := (d-i)Z_i(n).
\]
Then  $(U(n))_{n \geq 0}$ is a balanced $d$-colour generalized P\'{o}lya urn with replacement matrix entries
$R_{ij} = d\delta_{0,j} -(d-i)\delta_{i,j}+(d-i-1)\delta_{i+1,j}$ for $i \leq d-1$. Moreover, as in \cite{janson2005asymptotic}, one can verify that $u$ defined as 
 \[
u_i := \frac{\binom{2d-i-2}{d-1}}{\binom{2d-1}{d}} = \frac{d-i}{d-1}\frac{\binom{2d-i-2}{d-2}}{\binom{2d-1}{d-1}} = \frac{d-i}{d-1}s_i
\]
is an eigenvector of $R$, with eigenvalue $\lambda = d-1$, which is the change in the number of balls at each step of the urn. Furthermore, we have $T(n)=(d-1)n+1$.\\

For all $i<d$, we have
\begin{align*}
    \EE&\left[ \left| \frac{Z_i(n)}{n} - s_i \right|^2 \right]= \EE\left[ \left| \frac{d-1}{d-i}\left( \frac{(d-i)Z_i(n)}{(d-1)n} - u_i \right) \right|^2 \right]\\
    &\qquad = \EE\left[ \left| \frac{d-1}{d-i}\left(\frac{(d-i)Z_i(n)}{(d-1)n+1} - u_i + \frac{(d-i)Z_i(n)}{(d-1)n((d-1)n+1)} \right) \right|^2 \right]\\
    &\qquad = \EE\left[ \left|\frac{d-1}{d-i}\left( \hat{U}_i(n) -u_i \right) + \frac{1}{d-i}\frac{\hat{U}_i(n)}{n} \right|^2 \right]\\
    &\qquad \leq 2\EE\left[ \left| \frac{d-1}{d-i}\left( \hat{U}_i(n) -u_i\right) \right|^2 \right] + 2\EE\left[ \left| \frac{\hat{U}_i(n)}{(d-i)n} \right|^2 \right]\\
    &\qquad \leq 2\left(d-1\right)^2\EE\left[ \left\| \hat{U}(n) - u \right\|^2 \right] + \frac{2}{n^2}
\end{align*}

and

\begin{align*}
    \PP&\left( \exists m\geq  n \left( \left|\frac{Z_i(m)}{m} - s_i \right|\geq \eps \right) \right)\\
    &\quad \leq \PP \left( \exists m\geq n \left( \frac{d-1}{d-i}\left| \hat{U}_i(m)-u_i\right| + \left|\frac{\hat{U}_i(m)}{(d-i)m}\right|\geq \eps \right) \right)\\
    &\quad \leq \PP \left( \exists m\geq n \left( \frac{d-1}{d-i}\left| \hat{U}_i(m)-u_i\right|\geq \frac{\eps}{2}\right)\right) + \PP\left( \exists m\geq n \left( \frac{\hat{U}_i(m)}{(d-i)m}\geq \frac{\eps}{2} \right) \right)\\
    &\quad \leq \PP \left( \exists m\geq n \left((d-1)\left\| \hat{U}(m)-u\right\|\geq \frac{\eps}{2}\right)\right) + \PP\left( \exists m\geq n \left( \frac{1}{m}\geq \frac{\eps}{2} \right) \right)\\
    &\quad = \PP \left( \exists m\geq n \left( \left\| \hat{U}(m)-u\right\|\geq \frac{\eps}{2(d-1)}\right) \right),
\end{align*}
whenever $n>2/\eps$. Thus, the result will follow from computing an upper bound on both
\[
\EE\left[ \left\| \hat{U}(n) - u \right\|^2 \right]\ \text{ and }\ \PP \left( \exists m\geq n \left( \left\| \hat{U}(m)-u\right\|\geq \frac{\eps}{2(d-1)}\right) \right).
\]

We have the following lemma.
\begin{lemma}\label{lemma_d_trees}
    For all $w \in \RR^{d}$ with $\sum_{i=0}^{d-1}w_i =0$, we have 
    \[
    \langle w, w(R- \lambda I)\rangle \le -\frac{2d-1}{2}\norm{w}
    \]
\end{lemma}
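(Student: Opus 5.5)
The plan is to follow the template of Lemmas \ref{lem:rec:ineq} and \ref{lemma_plane_trees}, but now with the full inner product, i.e.\ $p=d$ and $J_d=I$. I read the right-hand side as $-\frac{2d-1}{2}\norm{w}^2$, since the squared norm is what Remark \ref{rem:A:bound} and Corollary \ref{Lemma-urns-fast2} require. The steps are: compute the vector $w(R-\lambda I)$ explicitly, use $\sum_i w_i=0$ to simplify its first component, write $\langle w, w(R-\lambda I)\rangle$ as a diagonal part plus a sum of nearest-neighbour cross terms, and absorb the cross terms by AM--GM.

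\emph{Step 1: the entries.} With $\lambda=d-1$ and $R_{ij}=d\delta_{0,j}-(d-i)\delta_{i,j}+(d-i-1)\delta_{i+1,j}$,
\[
(R-\lambda I)_{ij}=d\,\delta_{0,j}-(2d-1-i)\delta_{i,j}+(d-i-1)\delta_{i+1,j}.
\]

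\emph{Step 2: the components of $w(R-\lambda I)$.} For $j=0$ the term $d\delta_{0,j}$ contributes $d\sum_i w_i=0$, and the term $\delta_{i+1,0}$ never fires. Hence
\[
(w(R-\lambda I))_0=-(2d-1)w_0 .
\]
For $1\le j\le d-1$ only two terms survive:
\[
(w(R-\lambda I))_j=-(2d-1-j)w_j+(d-j)w_{j-1}.
\]

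\emph{Step 3: the quadratic form.} Combining Step 2 gives
\[
\langle w,w(R-\lambda I)\rangle=-(2d-1)w_0^2-\sum_{j=1}^{d-1}(2d-1-j)w_j^2+\sum_{j=1}^{d-1}(d-j)w_jw_{j-1}.
\]

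\emph{Step 4: AM--GM on the cross terms.} Bound each cross term by $(d-j)w_jw_{j-1}\le\frac{d-j}{2}(w_j^2+w_{j-1}^2)$ and collect the coefficient of each $w_k^2$. The main bookkeeping is that $w_k^2$ receives $\frac{d-k}{2}$ from the term with index $j=k$ and $\frac{d-k-1}{2}$ from the term with index $j=k+1$. This gives:
\begin{itemize}
\item for $1\le k\le d-2$, the coefficient $-(2d-1-k)+\frac{2d-2k-1}{2}=-\frac{2d-1}{2}$, independent of $k$;
\item for $k=d-1$, only the $j=d-1$ term contributes, giving $-d+\frac12=-\frac{2d-1}{2}$;
\item for $k=0$, only the $j=1$ term contributes, giving $-(2d-1)+\frac{d-1}{2}=-\frac{3d+1}{2}\le-\frac{2d-1}{2}$.
\end{itemize}
Summing over $k$ yields $\langle w,w(R-\lambda I)\rangle\le-\frac{2d-1}{2}\norm{w}^2$.

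The only delicate point is this index bookkeeping at the two ends $k=0$ and $k=d-1$. Everything is explicit, so I expect no real obstacle. The resulting $A=\frac{2d-1}{2}$ exceeds $\lambda/2=\frac{d-1}{2}$, so by Remark \ref{rem:A:bound} the lemma feeds directly into Corollary \ref{Lemma-urns-fast2} with $p=d$.
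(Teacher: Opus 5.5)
Your proposal is correct and matches the paper's proof essentially line for line. It uses the same explicit computation of $w(R-\lambda I)$ with $\sum_i w_i=0$ killing the $d\delta_{0,j}$ contribution, the same nearest-neighbour AM--GM bound, and the same (intended) reading of the right-hand side as $-\frac{2d-1}{2}\norm{w}^2$. One small arithmetic slip: the coefficient of $w_0^2$ is $-(2d-1)+\frac{d-1}{2}=-\frac{3d-1}{2}$ (as in the paper), not $-\frac{3d+1}{2}$; this is still at most $-\frac{2d-1}{2}$, so your conclusion is unaffected.
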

\begin{proof}
    We have 
    \[
    (R-\lambda I)_{ij} = -(2d-i-1)\delta_{i,j} +(d-i-1)\delta_{i+1,j}+d\delta_{0,j}.
    \]
 Thus, we have 
 \begin{align*}
       (w(R- \lambda I))_0& =\sum_{i=0}^{d-1}(-(2d-i-1)\delta_{i,0}w_i +(d-i-1)\delta_{i+1,0}w_i+d\delta_{0,0}w_i)\\
       &=-(2d-1)w_0 + d\sum_{i=0}^{d-1}w_i=-(2d-1)w_0
 \end{align*}
    where we use the fact that the components of $w$ sum to 0 to obtain the last equality. Now for $1\le j\le d-1$ we have 
    \[
    (w(R- I))_j = \sum_{i=0}^{d-1}(-(2d-i-1)\delta_{i,j}w_i +(d-i-1)\delta_{i+1,j}w_i+d\delta_{0,j}w_i)   = -(2d-j-1)w_j+(d-j)w_{j-1}
    \]
    So we have 
    \[
    \langle w, w(R- \lambda I)\rangle =-(2d-1)w_0^2 - \sum_{j=1}^{d-1}(2d-j-1)w_j^2 +  \sum_{j=1}^{d-1}(d-j)w_{j}w_{j-1}. 
    \]
Now applying AM-GM, yields
\[
\sum_{j=1}^{d-1} (d-j)w_jw_{j-1} \leq \frac{1}{2}\left((d-1)w_0^2 + \sum_{j=1}^{d-1}(2d-2j-1)w_j^2\right).
\]
Thus,
\begin{align*}
\langle w, w(R- \lambda I)\rangle &\leq -(2d-1)w_0^2 +\frac{d-1}{2}w_0^2 - \sum_{j=1}^{d-1}(2d-j-1)w_j^2 +  \sum_{j=1}^{d-1}\frac{2d-2j-1}{2}w_j^2 \\
&= -\frac{3d-1}{2}w_0^2 - \sum_{j=1}^{d-1}\frac{2d-1}{2}w_j^2  \le  -\frac{2d-1}{2}\norm{w}
\end{align*}
and the result follows.
\end{proof}
We have
\[
\|R_0\|^2=\sum_{j=0}^{d-1}(d\delta_{0,j} -d\delta_{0,j}+(d-1)\delta_{1,j})^2 =(d-1)^2
\]
and for $0<i\leq d-1$
\begin{align*}
    \|R_i\|^2&=\sum_{j=0}^{d-1}(d\delta_{0,j} -(d-i)\delta_{i,j}+(d-i-1)\delta_{i+1,j})^2\\
    &=d^2+(d-i)^2+(d-i-1)^2\\
    &\leq 3d^2.
\end{align*}
Hence, we have $\max_{i\leq d-1}\|R_i\|\leq\max_{i\leq d-1}\|R_i\|\leq \sqrt{3}d\leq 2d$.\\

We can now apply Corollary \ref{Lemma-urns-fast2} with $A:=(2d-1)/2$ (it is clear that $A> \lambda/2 =(d-1)/2$), $p=d$ and $D:=2d$, to obtain, 
\[
\EE\left[ \left\| \hat{U}(n)-u\right\|^2 \right]\leq \frac{(3d-1)^2}{d((d-1)n+d)}\ 
\]
and 
\[
\PP \left( \exists m\geq n \left( \|\hat{U}(m)-u\|>\eps\right)\right) \leq \frac{(3d-1)^3}{\eps^2d(d-1)}\cdot\frac{1}{(d-1)n+d}  .
\]
Thus, for all $i<d$ and $n\geq 1$,
\[
\EE\left[ \left| \frac{Z_i(n)}{n} - s_i\right|^2 \right]\leq \frac{2(d-1)^2(3d-1)^2}{d((d-1)n+d)} + \frac{2}{n^2}\leq \frac{2(3d-1)^2}{n}+ \frac{2}{n^2}\le \frac{19d^2}{n}
\]
and, for $n>2/\eps$,
\[
\PP\left( \exists m\geq  n \left( \left|\frac{Z_i(m)}{m} - s_i\right|\geq \eps \right) \right)\leq \frac{4(3d-1)^3(d-1)^2}{\eps^2d(d-1)}\cdot\frac{1}{(d-1)n+d}\le \frac{12(3d-1)^2}{\eps^2n}\le  \frac{108d^2}{\eps^2n}.
\]
For the case $i = d$, we have
\[
\EE\left[ \left| \frac{Z_d(n)}{n} - s_d\right|^2 \right]= \EE\left[ \left| \sum_{i=0}^{d-1}\frac{Z_i(n)}{n} - s_i\right|^2 \right]\le d \sum_{i=0}^{d-1}\EE\left[ \left| \frac{Z_i(n)}{n} - s_i\right|^2 \right] \le \frac{19d^4}{n}.
\]
The first equality follows from the fact that both the sum of $Z_i(n)/n$ and $s_i$ for $0\le i\le d$ are equal to 1. The penultimate inequality follows from Cauchy-Schwarz. Furthermore, we have
\begin{align*}
    \PP\left( \exists m\geq  n \left( \left|\frac{Z_d(m)}{m} - s_d\right|\geq \eps \right) \right) &=\PP\left( \exists m\geq  n \left( \left|\sum_{i=0}^{d-1}\frac{Z_i(n)}{n} - s_i\right|\geq \eps \right) \right)\\
    &\le \sum_{i=0}^{d-1}\PP\left( \exists m\geq  n \left( \left|\frac{Z_i(n)}{n} - s_i\right|\geq \frac{\eps}{d} \right) \right)\\
    &\le  \frac{108d^5}{\eps^2n}.
\end{align*}
This concludes the proof.

\noindent
{\bf Acknowledgments:} The authors would like to thank C\'ecile Mailler for her helpful comments. The second author was supported
by the DFG Project PI 2070/1-1 and by FCT -- Fundação para a Ciência e a Tecnologia, through national funds, under the project UID/04561/2025 (\protect{\url{https://doi.org/10.54499/UID/04561/2025}}).

\bibliographystyle{plain}
\bibliography{ref}

\end{document}